\newtheorem{definition}{Definition}[section]
\newtheorem{theorem}{Theorem}[section]
\newtheorem{proposition}[theorem]{Proposition}
\newtheorem{lemma}[theorem]{Lemma}
\newtheorem{remark}[theorem]{Remark}
\newtheorem{corollary}[theorem]{Corollary}
\DeclareMathOperator{\Hom}{Hom}
\DeclareMathOperator{\spn}{span}
\DeclareMathOperator{\spt}{supp}
\DeclareMathOperator{\Aut}{Aut}
\DeclareMathOperator{\mil}{mil}
\DeclareMathOperator{\Ext}{Ext}
\DeclareMathOperator{\tr}{tr}
\DeclareMathOperator{\id}{id}
\begin{document}
\title{A Family of Projective Representations of the Thompson Group and Lifting Problems}

\author{Jun Yang}
\date{January 21, 2020}

\maketitle
\abstract{
The Thompson group F has a natural unitary representation on $H=L^2[0,1]$. 
With some projections, we construct a family of projective
unitary representations on a Fermionic Fock space associated with $H$. 
It comes from the representation of the associated CAR algebra. 
After $H^2(F;S^1)$ is obtained, we mainly study whether any of these projective unitary representations can be lifted to an ordinary one. 
We will discuss the lifting problem of these projective representations.
}
\tableofcontents

\section{Introduction}

In this paper, we mainly construct a family of projective unitary representations of the Thompson group $F$.
It is realized through the CAR algebra of $H=L^2([0,1])$ on a Fermionic Fock space.
Then, after getting $H^2(F;S^1)$, we will discuss the lifting problem.

In chapter 2, we review some basic facts of the representation theory of the CAR algebras over the Fermionic Fock spaces.
Given a projection $P$ on a Hilbert space $H$, the restricted unitary group is  $U_{res}(H)=\{u\in U(H)|[u,P] \text{~is Hilbert-Schmidt}\}$. 
By its action on the associated CAR algebra, 
there is an action on the Fermionic Fock spaces associated with the projection which gives us a projective representation. 
An interesting question is whether this projective representation can be lifted to an ordinary one when we focus on a subgroup. 
We mainly study two approaches to it: vacuum vectors and one parameter subgroups. 

In chapter 3, we give a short introduction to the Thompson group $F$ with some presentations. 
A unitary representation on $H=L^2[0,1]$ called Koopman representation is involved there.  
We also study the cohomology groups with coefficient in $s^1$ using the classifying space given by Quillen.

In chapter 4, we construct a projection and apply the theory of Section 2 to obtain a projective representation of the group $F$. 
We try to answer the question by giving a concrete computation to examine whether there is a lifting. 
We also generalize the special projection to the ones correspond to the elements in $SO(2)$ or $U(2)$. 

The author would like to thank his advisor Vaughan F.R. Jones for the work on construction of these representations. 
This work will never be done with out him. 
James Tenner also provided great help for the lifting problem. 

\section{A Projective Unitary Representation of the Restricted Unitary Group}
\subsection{The restricted unitary group}
Let $H$ be an infinite dimensional Hilbert space. Let $P\in B(H)$ be a nontrivial projection with infinite dimensional range and kernel.

\begin{definition}
$U_{res}(H)=\{u\in U(H)|[u,P] \text{~is Hilbert-Schmidt}\}$ is the restricted unitary group of $H$. 
\end{definition}

$U_{res}(H)$ is a Banach Lie group.  
We also define
\begin{center}
$\mathcal{A}=\{x\in B(H)|x=x^*, Px(1-P) \text{~is Hilbert-Schmidt}\}$
\end{center}
which will behave as the Lie algebra of $U_{res}(H)$. i.e. $e^{i\mathcal{A}}\subset U_{res}(H)$.

Take an element $X \in B(H)$. 
Let
\begin{equation*}
 X=\left(
  \begin{array}{cc}
  X^{(1,1)}&X^{(1,2)} \\
          X^{(2,1)}&X^{(2,2)}
 \end{array}
 \right), 
\end{equation*}
be the decomposition of $X$ with respect to the direct sum $H=PH\oplus (1-P)H$. 
Then we have 
\begin{enumerate}
    \item For any $u\in U(H)$, we have $u\in U_{res}(H)$ iff $u^{(1,2)},u^{(2,1)}$ are Hilbert-Schmidt. 
    \item For any self-adjoint $X\in B(H)$, we have $X \in \mathcal{A}$ iff $X^{(1,2)}$ is Hilbert-Schmidt. 
\end{enumerate}

We definte the index of a group element $u$ be the Fredholm index of $u^{(1,1)}=PuP$. 
\begin{center}
$i(u)=\text{index}(u^{(1,1)})$
\end{center}

\begin{theorem}[\cite{CHB}]
Suppose $u_0,u_1\in U_{res}(H)$. 
\begin{enumerate}
\item The connected components of $U_{res}(H)$ are
$U_{res}(H,k)=\{u\in U_{res}(H)|i(u)=k\}$. 
Moreover $i(u_0)=i(u_1)$ iff $|u_0-u_1|<2$ and $i(u_0)\neq i(u_1)$ iff $|u_0-u_1|=2$
\item If $i(u_0)=i(u_1)$, then there exists an $X \in \mathcal{A}$ such that
\begin{center}
$u_s=u_0e^{isX}$, $0\leq s\leq 1$
\end{center}
is a path in $U_{res}(H)$ connection $u_0,u_1$. And such $X$ is uniquely determined if its spectrum is contained in $(\pi,\pi)$. 
\end{enumerate}
\end{theorem}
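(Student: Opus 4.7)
The plan is to prove part 2 first, since each connected component in part 1 will follow from it, and then deduce the component structure and the norm dichotomy from continuity of the Fredholm index. For part 2, I would reduce to the case $u_0 = 1$ by setting $v = u_0^{-1}u_1 \in U_{res}(H)$, which has index $0$, and seeking $X \in \mathcal{A}$ with $e^{iX} = v$. Assuming first that $-1 \notin \sigma(v)$, define $X = -i\log v$ via the principal branch of the logarithm, so that $X$ is self-adjoint with spectrum in $(-\pi,\pi)$ and $u_s = u_0 e^{isX}$ is norm-continuous. Uniqueness of such $X$ is then immediate from the spectral theorem since $t \mapsto e^{it}$ is a bijection from $(-\pi,\pi)$ onto $S^1 \setminus \{-1\}$. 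When $-1 \in \sigma(v)$, first multiply $v$ by $e^{-i\theta}$ for some small $\theta > 0$ to push the spectrum off $-1$ and concatenate with the short path $s \mapsto e^{is\theta}$, both of which stay in $U_{res}(H)$ because scalar multiples of the identity lie in $\mathcal{A}$ trivially.

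The key technical step is verifying that $X^{(1,2)} = PX(1-P)$ is Hilbert-Schmidt. My approach is the contour integral
\begin{equation*}
X = \frac{1}{2\pi i}\oint_\Gamma f(z)(z-v)^{-1}\,dz, \qquad f(z) = -i\log z,
\end{equation*}
along a contour $\Gamma \subset \mathbb{C}\setminus\{0,-1\}$ enclosing $\sigma(v)$. Combining the resolvent identity $[P,(z-v)^{-1}] = (z-v)^{-1}[P,v](z-v)^{-1}$ with $P(1-P) = 0$ yields
\begin{equation*}
PX(1-P) = \frac{1}{2\pi i}\oint_\Gamma f(z)(z-v)^{-1}[P,v](z-v)^{-1}(1-P)\,dz.
\end{equation*}
Because $[P,v]$ is Hilbert-Schmidt by the definition of $U_{res}(H)$ and the HS operators form a two-sided ideal in $B(H)$, the integrand is an HS-valued continuous function on the compact contour, so the integral converges in Hilbert-Schmidt norm and $X \in \mathcal{A}$, as required.

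For part 1, the map $u \mapsto u^{(1,1)} = PuP$ sends $U_{res}(H)$ continuously into the Fredholm operators on $PH$, and the Fredholm index is locally constant there, so $i$ is constant on each connected component; conversely, exhibiting one representative in each $U_{res}(H,k)$ (for instance, a shift on $(1-P)H$) and invoking part 2 shows that $U_{res}(H,k)$ is path-connected. The norm dichotomy then follows from the observation $\|u_0 - u_1\| = \|u_0^{-1}u_1 - 1\|$, which is strictly less than $2$ exactly when $-1 \notin \sigma(u_0^{-1}u_1)$; in that case the logarithm construction above directly produces the path, forcing $i(u_0) = i(u_1)$, and continuity of the index handles the reverse implication. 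I expect the main obstacle to be the Hilbert-Schmidt estimate on the off-diagonal block of the logarithm, which is precisely what the resolvent/contour calculation is designed to address; the rest of the argument is index-theoretic bookkeeping.
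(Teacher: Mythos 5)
The paper offers no proof of this theorem: it is stated as a citation to Carey--Hurst--O'Brien \cite{CHB}, so there is no internal argument to compare against and your proposal must be judged on its own merits.

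Your central technical step --- showing $X^{(1,2)}=PX(1-P)$ is Hilbert--Schmidt by writing $X=-i\log v$ as a Riesz--Dunford integral, pulling the commutator through the resolvent via $[P,(z-v)^{-1}]=(z-v)^{-1}[P,v](z-v)^{-1}$, and using that the Hilbert--Schmidt operators form a two-sided ideal --- is correct and is indeed the heart of the matter. However, the ``index-theoretic bookkeeping'' you defer is where the proposal breaks down. First, when $-1\in\sigma(v)$ your perturbation $v\mapsto e^{-i\theta}v$ plus concatenation produces a path in $U_{res}(H)$, which suffices for connectedness, but it does \emph{not} produce a single self-adjoint $X\in\mathcal{A}$ with $u_s=u_0e^{isX}$, which is what Part~2 asserts; you can only rule this case out by appealing to the norm dichotomy of Part~1, yet your proof of Part~1 invokes Part~2, so the argument as organized is circular. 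Second, the sentence ``continuity of the index handles the reverse implication'' is not a proof: continuity of the Fredholm index gives you constancy of $i$ on connected components, i.e.\ one direction, and says nothing about whether equal index forces $\|u_0-u_1\|<2$. In fact that implication fails for the operator norm: with $u_0=1$ and $u_1=1_{PH}\oplus(-1_{(1-P)H})$ one has $[u_1,P]=0$, $i(u_1)=0=i(u_0)$, yet $\|u_0-u_1\|=2$. So either the quoted ``iff'' is a misstatement of \cite{CHB} (most likely --- the genuine statement is a one-sided implication, or uses a different metric), or the claim you are asserting is simply false; in either case your argument does not establish it and you should not wave it through as bookkeeping. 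The safe reorganization is to prove the one-parameter-arc statement only under the hypothesis $\|u_0-u_1\|<2$ (equivalently $-1\notin\sigma(v)$), derive from it that the open norm ball of radius $2$ around any $u$ lies in one component, and then argue component-connectedness separately by explicit paths that do not rely on the dichotomy.
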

We let $U_{res}^0(H)=U_{res}(H,0)$ denote the connected subgroup containing the identity.

\subsection{Fermionic Fock space representation of $U_{res}(H)$}

Starting with $H$ and its inner product $\langle\cdot,\cdot\rangle$,
there is a Fermionic Fock space $\wedge H$ which is also a Hilbert space spanned by
\begin{center}
$f_1\wedge\cdots\wedge f_k$, $k\in \mathbb{N}$ and $f_i\in H$ for $1\leq i \leq k$
\end{center}
with inner product
$\langle f_1\wedge\cdots\wedge f_k,g_1\wedge\cdots\wedge g_l\rangle=\delta_{g,l}\det(\langle f_i,g_j\rangle)$.

There is a CAR (canonical anticomutation relation) algebra $CAR(H)$ which is a complex algebra generated by the $\mathbb{C}-$linear symbols $\{a(f)|f\in H\}$ with the anticomutation relations
\begin{center}
$\{a(f),a(g)\}=0$ and $\{a(f),a(g)^*\}=\langle f,g\rangle$
\end{center}
where $\{X,Y\}=XY+YX$.

The algebra $CAR(H)$ has a representation $\pi$ on the Fermionic Fock space $\wedge H$ \cite{Tol} given by
\begin{enumerate}
\item $\pi(a(f))(g_1\wedge\cdots\wedge g_m)=f\wedge g_1\wedge\cdots\wedge g_m$,
\item $\pi(a(f)^{*})(g_1\wedge\cdots\wedge g_m)=\sum_{i=1}^{m}{(-1)}^{i-1}\langle g_i,f\rangle g_1\wedge\cdots\wedge\widehat{g_i}\wedge \cdots\wedge g_m  $
\end{enumerate}

\begin{lemma}[\cite{BSZ}]
The representation $\pi$ of $CAR(H)$ on $\wedge H$ is irreducible.
\end{lemma}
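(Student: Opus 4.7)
The plan is to prove irreducibility by the standard cyclic-vacuum-plus-Schur argument. I would single out the vacuum vector $\Omega=1\in\wedge^{0}H\subset\wedge H$ and show three things in sequence: (i) $\Omega$ is cyclic for $\pi(CAR(H))$; (ii) $\Omega$ is, up to scalars, the unique vector killed by every annihilation operator $\pi(a(f)^{*})$; and (iii) these two facts force the commutant of $\pi(CAR(H))$ to be $\mathbb{C}I$, hence $\pi$ is irreducible.

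For step (i), I would observe from the explicit formulas for $\pi$ that $\pi(a(f_{1}))\cdots\pi(a(f_{k}))\Omega=f_{1}\wedge\cdots\wedge f_{k}$, and that such simple wedges span a dense subspace of $\wedge H$ by construction. Thus $\pi(CAR(H))\Omega$ is dense. For step (ii), I would note that $\pi(a(f)^{*})\Omega=0$ since the defining sum is empty in degree zero. Conversely, given $v\in\wedge H$ with $\pi(a(f)^{*})v=0$ for all $f\in H$, I would decompose $v=\sum_{k\geq 0}v_{k}$ into homogeneous components, using that each $\pi(a(f)^{*})$ lowers total degree by one, so each $v_{k}$ is separately annihilated by all $\pi(a(f)^{*})$. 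Fixing an orthonormal basis $\{e_{i}\}$ of $H$, expanding $v_{k}=\sum_{I}c_{I}\,e_{i_{1}}\wedge\cdots\wedge e_{i_{k}}$ over increasing multi-indices $I=\{i_{1}<\cdots<i_{k}\}$, and applying $\pi(a(e_{j})^{*})$ picks out the coefficients of the basis elements containing $j$; demanding this vanish for every $j$ forces $c_{I}=0$ whenever $k\geq 1$, so $v=c_{0}\Omega$.

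For step (iii), let $T\in B(\wedge H)$ commute with every $\pi(a(f))$ and every $\pi(a(f)^{*})$. Then for each $f$ we have $\pi(a(f)^{*})T\Omega=T\pi(a(f)^{*})\Omega=0$, so by (ii) there is a scalar $c$ with $T\Omega=c\Omega$. Using the cyclicity relation from (i),
\begin{equation*}
T\bigl(f_{1}\wedge\cdots\wedge f_{k}\bigr)=T\pi(a(f_{1}))\cdots\pi(a(f_{k}))\Omega=\pi(a(f_{1}))\cdots\pi(a(f_{k}))T\Omega=c\,f_{1}\wedge\cdots\wedge f_{k},
\end{equation*}
so $T$ agrees with $cI$ on a dense subspace, and therefore $T=cI$ by boundedness. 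Since the commutant of $\pi(CAR(H))$ is trivial, $\pi$ is irreducible.

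I expect step (ii), the uniqueness of the vacuum, to be the only point that requires genuine computation; the combinatorial bookkeeping with the multi-index expansion and the sign-bearing annihilation formula is where one must be careful. Steps (i) and (iii) are essentially formal once the vacuum has been correctly identified.
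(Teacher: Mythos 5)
The paper does not actually prove this lemma; it is stated with a bare citation to \cite{BSZ}. Your argument is the standard one and it is correct: you identify the vacuum $\Omega\in\wedge^{0}H$, show it is cyclic for $\pi(CAR(H))$ via $\pi(a(f_{1}))\cdots\pi(a(f_{k}))\Omega=f_{1}\wedge\cdots\wedge f_{k}$, show it is the unique (up to scalar) vector annihilated by all $\pi(a(f)^{*})$ by a degree-by-degree multi-index expansion, and then run the usual Schur argument on a commutant element $T$. The degree argument in step (ii) is sound because $\pi(a(f)^{*})$ is homogeneous of degree $-1$, so the condition $\pi(a(f)^{*})v=0$ separates over the grading, and applying $\pi(a(e_{j})^{*})$ to $v_{k}=\sum_{I}c_{I}e_{I}$ produces distinct basis vectors $e_{I\setminus\{j\}}$ with coefficients $\pm c_{I}$, forcing $c_{I}=0$ for every $I$ of size $k\geq1$. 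In step (iii) it is worth saying explicitly that $\pi(a(f)^{*})=\pi(a(f))^{*}$ lies in $\pi(CAR(H))$ because $CAR(H)$ is a $*$-algebra and $\pi$ is a $*$-representation, so any $T$ in the commutant commutes with both families; with that remark the chain $\pi(a(f)^{*})T\Omega=T\pi(a(f)^{*})\Omega=0$, hence $T\Omega=c\Omega$, hence $T=cI$ on the dense span of simple wedges and therefore everywhere, is airtight. Incidentally, your uniqueness-of-vacuum step is exactly the mechanism the paper itself uses later in Lemma 2.6 for the projected Fock space $\mathcal{F}_{P}$, so your route is entirely in keeping with the paper even though the paper leaves this particular lemma to the reference.
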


Let $P\in \mathcal{B}(H)$ be a projection.
There is a corresponding Fermionic Fock space defined by
\begin{center}
$\mathcal{F}=\wedge(PH)\widehat{\otimes}\wedge(P^{\bot} H)^{*}=\widehat{\otimes}_{m,n\geq 0}\mathcal{F}^{(m,n)}$.
\end{center}
where $\mathcal{F}^{(m,n)}=\wedge^m(PH)\widehat{\otimes}\wedge^n(P^{\bot} H)^{*}$. 
We have $\mathcal{F}$ is also a Hilbert space with the inner product from  $\wedge(PH)$ and $\wedge(P^{\bot} H)^{*}$.
If $\{p_i\}_{i\in \mathbb{N}}$ is an orthonormal basis of $PH$ and $\{q_i\}_{i\in \mathbb{N}}$ is an orthonormal basis of $P^{\perp}H$,
then we have a canonical orthonormal basis of $\mathcal{F}$.

\begin{lemma}
Given $\{p_i\}_{i\in \mathbb{N}}$ and $\{q_i\}_{i\in \mathbb{N}}$ as orthonormal basis of $PH$ and $P^{\perp}H$ respectively, then
\begin{center}
$\{p_{i_1}\wedge\cdots\wedge p_{i_k}\otimes q_{j_1}^*\wedge\cdots\wedge q_{j_l}^*|l,k\in \mathbb{N}\cup\{0\},1\leq i_1<\cdots<i_k,1\leq j_1<\cdots<j_l\}$
\end{center}
is an orthonormal basis of $\mathcal{F}$,
where $k,l=0$ stands for the vacuum vector $\Omega_1,\Omega_2$ in $\wedge(PH),\wedge(P^{\bot} H)^{*}$ respectively.
\end{lemma}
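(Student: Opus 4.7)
The proof is essentially a routine assembly of standard facts about exterior powers of Hilbert spaces and Hilbert tensor products, so the plan is structured as three bookkeeping steps rather than one deep argument.

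First, I would establish the one-factor statement: if $\{e_i\}_{i\in\mathbb{N}}$ is an orthonormal basis of a Hilbert space $K$, then $\{e_{i_1}\wedge\cdots\wedge e_{i_k}: i_1<\cdots<i_k\}$ is an orthonormal basis of $\wedge^k K$. Orthonormality drops out of the defining inner product $\langle f_1\wedge\cdots\wedge f_k,\, g_1\wedge\cdots\wedge g_k\rangle=\det(\langle f_a,g_b\rangle)$: when the two multi-indices are both strictly increasing, the matrix $(\langle e_{i_a}, e_{i'_b}\rangle)$ is either the identity (if the multi-indices coincide) or contains a zero row (forcing the determinant to vanish). Completeness follows by expanding an arbitrary wedge $f_1\wedge\cdots\wedge f_k$ multilinearly in the basis and then using antisymmetry to reduce each term to a strictly increasing tuple, with any repeated index killing the term.

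Second, I would apply this to $K=PH$ with basis $\{p_i\}$ and to $K=P^\perp H$ with basis $\{q_i\}$, producing orthonormal bases of $\wedge^k(PH)$ and $\wedge^l(P^\perp H)$. The conjugate-linear dual $\wedge^l(P^\perp H)^*$ then inherits the orthonormal basis $\{q_{j_1}^*\wedge\cdots\wedge q_{j_l}^*: j_1<\cdots<j_l\}$ under the canonical antiunitary identification, using that the dual inner product agrees with the original one on the corresponding basis elements. The degenerate cases $k=0$ and $l=0$ correspond to the one-dimensional subspaces spanned by the vacuum vectors $\Omega_1$ and $\Omega_2$, consistent with the convention $\wedge^0=\mathbb{C}$.

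Third, I would combine the two factors via the fact that for Hilbert spaces $A$ and $B$ with orthonormal bases $\{a_\alpha\}$ and $\{b_\beta\}$, the Hilbert tensor product $A\,\widehat{\otimes}\,B$ has orthonormal basis $\{a_\alpha\otimes b_\beta\}$. This yields an orthonormal basis of each graded piece $\mathcal{F}^{(k,l)}$, and since $\mathcal{F}=\widehat{\bigoplus}_{k,l\geq 0}\mathcal{F}^{(k,l)}$ as a Hilbert direct sum, the union over all $(k,l)$ of these bases is the claimed orthonormal basis of $\mathcal{F}$. There is no substantial obstacle; the only care needed is to fix conventions for the dual wedge $q_j^*$ and the vacuum vectors before invoking the tensor product lemma, so that the identifications are genuinely isometric.
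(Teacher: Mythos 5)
Your proof is correct and follows essentially the same route as the paper's: both reduce the claim to orthonormality (via the determinant formula for the wedge inner product) and density (via the definitions of $\wedge$, $\widehat\otimes$, and the graded sum). The paper's proof is a two-line sketch of the same argument; yours simply makes explicit the intermediate facts about $\wedge^k K$, the dual factor, and Hilbert tensor products that the paper leaves to the reader.
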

\begin{proof}
Note that $\epsilon_{i_1}\wedge\cdots\wedge \epsilon_{i_k}\otimes \eta_{j_1}^*\wedge\cdots\wedge \eta_{j_l}^*$ spans a dense subspace of $\mathcal{F}$ if $\epsilon_{i_r}\in PH$ and $\eta_{j_s}\in P^{\perp}H$.

Such a vector can be approximated by finite linear combinations of the vectors defined above.
One can easily check the these vectors are orthonormal, which completes the proof.
\end{proof}

We can define a representation $\pi_P$ of $CAR(H)$ on $\mathcal{F}$ given by
\begin{center}
$\pi_P(a(f))=a(Pf)\otimes 1 + 1\otimes a((P^{\bot}f)^{*})^{*}$.
\end{center}
where $a(\cdot)$ stands for the action $\pi$ (defined above) on the corresponding exterior space $\wedge(PH)$ or $\wedge((P^{\perp}H)^{*})$.
\begin{lemma}[\cite{BSZ}]
The representation $\pi_P$ of $CAR(H)$ on $\mathcal{F}$ is irreducible.
\end{lemma}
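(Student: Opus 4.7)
The plan is to identify $\pi_P$ with the standard Fermionic Fock representation applied to a suitably chosen one-particle Hilbert space, and then invoke the already-cited irreducibility result for $\pi$ on $\wedge H$.

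Set $K := PH \oplus (P^{\perp}H)^{*}$, and let $\pi_K$ denote the standard Fock representation of $CAR(K)$ on $\wedge K$. I first introduce a $*$-algebra homomorphism $\phi : CAR(H) \to CAR(K)$ defined on generators by $\phi(a(f)) = a(Pf) + a((P^{\perp}f)^{*})^{*}$, where on the right-hand side $a(Pf)$ is the generator of $CAR(K)$ associated with $Pf \in PH \subseteq K$, and $a((P^{\perp}f)^{*})$ is the generator associated with $(P^{\perp}f)^{*} \in (P^{\perp}H)^{*} \subseteq K$. Because $PH$ and $(P^{\perp}H)^{*}$ are orthogonal inside $K$, the CAR relations in $CAR(K)$ immediately give $\{\phi(a(f)),\phi(a(g))\} = 0$ and $\{\phi(a(f)),\phi(a(g))^{*}\} = \langle Pf,Pg\rangle + \langle P^{\perp}f,P^{\perp}g\rangle = \langle f,g\rangle$, so $\phi$ is a well-defined $*$-homomorphism.

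Next I check that $\phi$ is surjective: for $f \in PH$, $\phi(a(f)) = a(f)$; and for $g \in P^{\perp}H$, $\phi(a(g))^{*} = a(g^{*})$. Together with their adjoints these exhaust the CAR generators of $K = PH \oplus (P^{\perp}H)^{*}$. Under the canonical identification $\wedge K \cong \wedge(PH)\widehat{\otimes}\wedge((P^{\perp}H)^{*}) = \mathcal{F}$ read off from the ordered orthonormal basis of the preceding lemma, the composition $\pi_K \circ \phi$ matches $\pi_P$ term by term: the creation $a(Pf)$ on $\wedge K$ is sent to $a(Pf) \otimes 1$, and the annihilation $a((P^{\perp}f)^{*})^{*}$ on $\wedge K$ is sent to $1 \otimes a((P^{\perp}f)^{*})^{*}$.

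Combining these steps, $\pi_P(CAR(H)) = \pi_K(CAR(K))$, which is irreducible on $\mathcal{F}$ by the earlier irreducibility lemma applied to the Hilbert space $K$. The part requiring care is the sign bookkeeping in the identification $\wedge K \cong \wedge(PH) \widehat{\otimes} \wedge((P^{\perp}H)^{*})$: the graded structure implicit in $\widehat{\otimes}$ must be arranged so that the $K$-side creation and annihilation operators coming from the ordered basis produce exactly the tensor-factor operators appearing in the definition of $\pi_P$, rather than their signed variants. Once this identification is pinned down on the basis, the reduction to the earlier lemma is immediate.
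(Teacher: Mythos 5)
The paper does not actually prove this lemma; it is cited from [BSZ] without argument, so there is no in-paper proof to compare against. Your reduction is correct and is the standard one. The $*$-homomorphism $\phi : CAR(H) \to CAR(K)$, with $K = PH \oplus (P^\perp H)^*$, is well defined (the cross anticommutators vanish because the two summands of $K$ are orthogonal) and surjective (restricting $f$ to $PH$ gives $a(\xi)$ for all $\xi \in PH$; restricting to $P^\perp H$ and taking adjoints gives $a(\zeta)$ for all $\zeta \in (P^\perp H)^*$; linearity then generates all of $CAR(K)$). Hence $\pi_P(CAR(H)) = \pi_K(CAR(K))$, which is irreducible by the preceding [BSZ] lemma applied to $K$, and the conclusion follows. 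One remark on the deferred sign bookkeeping, to make it less of a loose end: the paper's formula $\pi_P(a(f)) = a(Pf)\otimes 1 + 1\otimes a((P^\perp f)^*)^*$ can only satisfy the CAR relations if the tensor product is the \emph{graded} one (equivalently, if the second term carries the parity operator $(-1)^N$ on $\wedge(PH)$), since otherwise $\{a(Pf)\otimes 1,\, 1\otimes a((P^\perp g)^*)^*\}$ does not vanish. Under that reading, the exponential-law isomorphism $\wedge(V\oplus W) \cong \wedge V\,\widehat\otimes\,\wedge W$ carries $a(\xi)$ to $a(\xi)\,\widehat\otimes\,1$ for $\xi\in V$ and $a(\zeta)$ to $1\,\widehat\otimes\, a(\zeta)$ for $\zeta\in W$ with exactly the same Koszul signs, so $\pi_K\circ\phi = \pi_P$ holds on the nose and your reduction is complete.
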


Given to projections $P,Q\in \mathcal{B}(H)$, we have the following result on the equivalence of representations $\pi_P,\pi_Q$.
\begin{theorem}[Segal's equivalence criterion\cite{BSZ}\cite{Wa}]
For two projections $P,Q\in \mathcal{B}(H)$, $\pi_P,\pi_Q$ are unitarily equivalent if and only if $P-Q$ is a Hilbert-Schmidt operator.
\end{theorem}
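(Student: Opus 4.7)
The proof plan is to reduce unitary equivalence of $\pi_P$ and $\pi_Q$ to the realizability of a specific ``$P$-vacuum'' vector inside $\mathcal{F}_Q$, and then to recognize the squared norm of that vector as a Fredholm determinant whose convergence is controlled by $P-Q$. Because $\pi_P$ is irreducible with cyclic vacuum $\Omega_P$ satisfying
\[
\pi_P(a(f))\Omega_P = 0 \text{ for } f \in P^{\bot}H, \qquad \pi_P(a(f)^*)\Omega_P = 0 \text{ for } f \in PH,
\]
a unitary equivalence $\pi_P \cong \pi_Q$ exists if and only if there is a unit cyclic vector $\Omega \in \mathcal{F}_Q$ fulfilling the same identities with $\pi_Q$ in place of $\pi_P$.

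I would then construct $\Omega$ explicitly. Decomposing $f$ along $H = QH \oplus Q^{\bot}H$ and expanding in the canonical basis of $\mathcal{F}_Q$ supplied by the previous lemma, the two annihilator conditions become a homogeneous linear recursion on the coefficients of $\Omega$. It admits a unique (up to scalar) formal solution of fermionic Gaussian type,
\[
\Omega = c\cdot\exp\Bigl(\sum_{i,j} T_{ij}\, a^*(p_i)\otimes a^*(q_j^*)\Bigr)\Omega_Q,
\]
whose coefficient matrix $T$ is a bounded transform of the off-diagonal block $QPQ^{\bot}$; in a natural basis one gets $T = (QPQ)^{-1}QPQ^{\bot}$, or the appropriate analogue if $QPQ$ fails to be invertible on all of $QH$.

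The standard fermionic Gaussian identity $\|\exp(T)\Omega_Q\|^2 = \det(1 + T^*T)$ is finite if and only if $T$ is Hilbert-Schmidt. Using $P^2 = P$ and $Q^2 = Q$, one checks that $T$ is Hilbert-Schmidt iff $QPQ^{\bot}$ and $Q^{\bot}PQ$ are, which is equivalent to $P - Q$ being Hilbert-Schmidt. The converse direction follows by transporting $\Omega_P$ through a given unitary intertwiner: the image is a unit $P$-vacuum in $\mathcal{F}_Q$ and, by the uniqueness of the formal solution, must agree up to phase with the squeezed state above, so finiteness of its norm again forces $P - Q$ to be Hilbert-Schmidt.

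The main obstacle is the finite-rank index correction when $QPQ$ or $Q^{\bot}PQ^{\bot}$ is not invertible on its natural subspace, which happens whenever $PH$ and $QH$ fail to be in ``generic position.'' In that case the exponential formula for $\Omega$ must be preceded by a finite-dimensional Bogoliubov rotation pairing off the finite-dimensional kernels of these diagonal blocks, and one has to verify that such a finite-rank adjustment does not alter the Hilbert-Schmidt test on $P - Q$; making this pairing precise and controlling the sign/phase is the analytic heart of the argument.
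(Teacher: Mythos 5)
The paper itself gives no proof of this theorem: it is stated with citations to \cite{BSZ} and \cite{Wa} and used as a black box, so there is no internal argument to compare your attempt against. Your outline is the classical one (GNS/vacuum characterization, fermionic squeezed state, Fredholm determinant), and it is the right strategy in principle.

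There is, however, a genuine gap in the step where you write that ``$T$ is Hilbert-Schmidt iff $QPQ^{\bot}$ and $Q^{\bot}PQ$ are, which is equivalent to $P-Q$ being Hilbert-Schmidt.'' The second equivalence is false. Take $H=\ell^2(\mathbb{Z})$, let $Q$ project onto $\spn\{e_n : n\ge 0\}$ and $P$ onto $\spn\{e_{2n} : n\ge 0\}$; then $PH\subset QH$, so $QPQ^{\bot}=Q^{\bot}PQ=0$ are trivially Hilbert-Schmidt, yet $P-Q$ has infinite rank. Hilbert-Schmidtness of the off-diagonal blocks controls only the ``rotation'' part of $P$ relative to $Q$; you additionally need $\ker(QPQ|_{QH})$ and $\ker(Q^{\bot}P^{\bot}Q^{\bot}|_{Q^{\bot}H})$ to be finite-dimensional, and it is precisely the off-diagonal HS condition \emph{together with} this index finiteness that is equivalent to $P-Q$ being Hilbert-Schmidt. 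You do flag the index correction later, but you describe it as a finite-rank bookkeeping issue (sign and phase), whereas in fact the finiteness itself is an essential hypothesis, not automatic: when proving the converse direction ($\pi_P\cong\pi_Q\Rightarrow P-Q$ HS) you cannot assume the kernels are finite a priori. The correct argument there must observe that a cyclic $P$-vacuum vector $\Omega\in\mathcal{F}_Q$ lives in a fixed charge sector $\bigoplus_{m-n=k}\mathcal{F}_Q^{(m,n)}$ and can carry only finitely many ``unmatched'' holes or particles, which is what forces those two kernels to be finite-dimensional; only then does the Hilbert-Schmidtness of $T$ combine with the index finiteness to give $P-Q\in\mathrm{HS}$. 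As written, your converse direction extracts only the HS condition on $T$ and is therefore incomplete.
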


Given a unitary $u\in \mathcal{B}(H)$, the map $a(f)\mapsto a(uf)$ gives an automorphism of $CAR(H)$.
An interesting question is whether this $u$ can be realized by unitary elements in $\mathcal{B}(\mathcal{F})$.
\begin{definition}
$u$ is implemented in $\mathcal{F}$ if there is a unitary $U\in B(\mathcal{F})$ such that $\pi_P(a(uf))=U\pi_P(a(f))U^*$ for all $f\in H$.
\end{definition}

Then we have a criterion for the implementation of any given $u\in U(H)$.
\begin{corollary}
$u\in U(H)$ is implemented in $\mathcal{F}$ if $[u,P]$ is a Hilbert-Schmidt operator.
\end{corollary}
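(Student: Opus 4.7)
The plan is to reduce the statement to Segal's equivalence criterion by means of a change-of-projection argument. I would set $Q := uPu^{*}$, another projection on $H$, and note that because $u$ is unitary it restricts to unitary isomorphisms $PH \to QH$ and $P^{\bot}H \to Q^{\bot}H$. Taking exterior powers on the first factor and dual exterior powers on the second then assembles into a canonical unitary $\tilde{u} : \mathcal{F} \to \mathcal{F}_Q$, where $\mathcal{F}_Q := \wedge(QH) \widehat{\otimes} \wedge(Q^{\bot}H)^{*}$ is the Fock space attached to the projection $Q$.

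The main calculation is to verify the intertwining identity
\[
\tilde{u} \, \pi_P(a(f)) \, \tilde{u}^{*} \;=\; \pi_Q(a(uf)) \qquad (f \in H).
\]
This follows from the formula $\pi_P(a(f)) = a(Pf)\otimes 1 + 1\otimes a((P^{\bot}f)^{*})^{*}$ together with the observation that on each exterior algebra factor $\tilde{u}$ conjugates the creator of a vector $g$ into the creator of $ug$, once one uses $uP = Qu$ and $uP^{\bot} = Q^{\bot}u$. This is a direct, if slightly fiddly, bookkeeping check on decomposable vectors, and is the one place where any real work happens.

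With the intertwining in hand, I would close the argument by invoking Segal's criterion. Since $Q - P = uPu^{*} - P = [u,P]\, u^{*}$ and $u^{*}$ is bounded, the hypothesis that $[u,P]$ is Hilbert-Schmidt implies that $Q - P$ is Hilbert-Schmidt. The criterion therefore furnishes a unitary $W : \mathcal{F} \to \mathcal{F}_Q$ satisfying $W \, \pi_P(a(g)) \, W^{*} = \pi_Q(a(g))$ for every $g \in H$. Setting $U := W^{*}\tilde{u} \in U(\mathcal{F})$, one then has
\[
U \, \pi_P(a(f)) \, U^{*} \;=\; W^{*} \tilde{u} \, \pi_P(a(f)) \, \tilde{u}^{*} W \;=\; W^{*} \, \pi_Q(a(uf)) \, W \;=\; \pi_P(a(uf)),
\]
so $u$ is implemented on $\mathcal{F}$ by the unitary $U$. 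The genuine obstacle is the intertwining identity for $\tilde{u}$; after that is in place, the corollary is essentially a two-line deduction from Segal's theorem together with the identity $Q - P = [u,P]u^{*}$.
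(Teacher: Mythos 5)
Your proposal is correct and follows essentially the same route as the paper: both define the conjugate projection (you take $Q=uPu^{*}$, the paper takes $Q=u^{*}Pu$, a cosmetic difference), build the canonical unitary between $\mathcal{F}_P$ and $\mathcal{F}_Q$ implementing $u$ via exterior powers, invoke Segal's equivalence criterion using the Hilbert--Schmidt identity $Q-P=[u,P]u^{*}$, and compose the two unitaries. Your write-up is in fact a bit cleaner, since the paper's first invocation of Segal's criterion contains a small typo (it writes $\pi_P(a(uf))=U\pi_Q(a(f))U^{*}$ where it should read $\pi_P(a(f))=U\pi_Q(a(f))U^{*}$), which you avoid.
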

\begin{proof}
Let $Q=u^* P u$. Then we have $P-Q=P-u^* P u=u^*(uP-Pu)=u^*[u,P]$ is Hilbert-Schmidt.
Then, by the theorem above, there is a unitary $U$ such that $\pi_P(a(uf))=U\pi_Q(a(f))U^*=U\pi_{u^* P u}(a(f))U^*$ for all $f\in H$.

On the other hand, there is a unitary $V\in B(\mathcal{F}_Q,\mathcal{F})$ defined by
\begin{equation*}
\begin{aligned}
~~~~~~~~&V(Qg_1\wedge\cdots\wedge Qg_m\otimes (Q^\perp h_1)^*\wedge\cdots\wedge (Q^\perp h_n))\\
&= uQg_1\wedge\cdots\wedge uQg_m\otimes (uQ^\perp h_1)^*\wedge\cdots\wedge (uQ^\perp h_n)\\
&= uQu^*ug_1\wedge\cdots\wedge uQu^*ug_m\otimes (uQ^\perp u^*u h_1)^*\wedge\cdots\wedge (uQ^\perp u^*u h_n)\\
&=Pug_1\wedge\cdots\wedge Pug_m\otimes (P^\perp u h_1)^*\wedge\cdots\wedge (P^\perp u h_n)\in \mathcal{F}.
\end{aligned}
\end{equation*}
where $g_i,h_j\in H$ for $1\leq i\leq m, 1\leq j\leq n$.

Similarly, we have $V^*\in B(\mathcal{F},\mathcal{F}_Q)$ acts on $\mathcal{F}$ by
\begin{equation*}
\begin{aligned}
~~~~~~~~&V^*(Pg_1\wedge\cdots\wedge Pg_m\otimes (P^\perp h_1)^*\wedge\cdots\wedge (P^\perp h_n))\\
&= u^*Pg_1\wedge\cdots\wedge u^*Pg_m\otimes (u^*P^\perp h_1)^*\wedge\cdots\wedge (u^*P^\perp h_n)\in \mathcal{F}_Q.
\end{aligned}
\end{equation*}

It implies that $V^*\pi_P(a(uf))V=\pi_Q(a(f))$ for all $f\in H$.
Then, by the unitary equivalence of $\pi_P,\pi_Q$, $u$ is implemented as
\begin{center}
$\pi_P(a(uf))=V\pi_Q(a(f))V^*=(VU^*)\pi_P(a(f))(VU^*)^*$ for all $f\in H$
\end{center}
with $VU^*\in B(\mathcal{F})$ unitary.
\end{proof}

This gives us a projection unitary representation
\begin{center}
$\Gamma:U_{res}(H)\to PU(\mathcal{F})$. 
\end{center}
which can be shown to be irreducible. 

\begin{proposition}[\cite{HW}]
The representation $\Gamma$ is irreducible and $\Gamma(u)(\oplus_{m-n=k}\mathcal{F}^{(m,n)})\subset \oplus_{m-n=k+i(u)}\mathcal{F}^{(m,n)}$. 
\end{proposition}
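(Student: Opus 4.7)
The plan is to leverage the irreducibility of $\pi_P$ together with the Bogoliubov intertwining relation $\Gamma(u)\pi_P(a(f))\Gamma(u)^{-1}=\pi_P(a(uf))$ so as to reduce both claims to Schur-type arguments. Throughout, I work on the algebraic core of $\mathcal{F}$ spanned by the orthonormal basis of the lemma above, on which all commutators with unbounded operators make sense.

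For the grading statement, introduce the charge operator $Q$ on $\mathcal{F}$ acting as $(m-n)I$ on $\mathcal{F}^{(m,n)}$, so that $\oplus_{m-n=k}\mathcal{F}^{(m,n)}$ is its $k$-eigenspace. The formula for $\pi_P$ yields $[Q,\pi_P(a(f))]=\pi_P(a(f))$ for every $f\in H$. Conjugating by $\Gamma(u)$ --- an operation well-defined despite the projective nature of $\Gamma$, since phases cancel --- and applying the intertwining relation shows that $\Gamma(u)Q\Gamma(u)^{-1}-Q$ commutes with every $\pi_P(a(g))$ and its adjoint; by the irreducibility of $\pi_P$ it is a scalar $c(u)I$, integer-valued since $Q$ has integer spectrum, and the identity $\Gamma(uv)Q\Gamma(uv)^{-1}=Q+(c(u)+c(v))I$ shows $c:U_{res}(H)\to\mathbb{Z}$ is a group homomorphism. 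To identify $c$ with the Fredholm index $i$: on $U_{res}^0(H)$, write $u=e^{iX}$ with $X\in\mathcal{A}$ by the theorem cited above; the second quantisation $d\Gamma(X)$ preserves charge, being a sum of bilinears $\pi_P(a(\cdot))\pi_P(a(\cdot)^*)$ each of charge $0$, so $c\equiv 0\equiv i$ on the identity component. Both homomorphisms are thus constant on the connected components of $U_{res}(H)$, and the proof is finished by checking $c=i$ on one representative of $U_{res}(H,1)$ --- for instance the one-sided shift $s$ obtained from reindexing the combined basis $\{p_i\}\cup\{q_i\}$ as a $\mathbb{Z}$-basis of $H$, for which an explicit computation of $\Gamma(s)\Omega$ places it in $\mathcal{F}_1$ up to a scalar.

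For irreducibility, let $A\in B(\mathcal{F})$ commute with every $\Gamma(u)$. First, the elements $u_\theta=e^{i\theta}P+P^{\bot}$ and $v_\theta=P+e^{i\theta}P^{\bot}$ commute with $P$, hence lie in $U_{res}(H)$; Bogoliubov forces $\Gamma(u_\theta)$ and $\Gamma(v_\theta)$ to be phases times $e^{i\theta N_1}$ and $e^{i\theta N_2}$ respectively, where $N_1,N_2$ are the number operators on $\wedge(PH)$ and $\wedge(P^{\bot}H)^*$, so $A$ preserves every $\mathcal{F}^{(m,n)}$. Second, the block-diagonal subgroup $U(PH)\times U(P^{\bot}H)\subset U_{res}(H)$ acts on $\mathcal{F}^{(m,n)}=\wedge^m(PH)\widehat{\otimes}\wedge^n(P^{\bot}H)^*$ as the outer tensor product of two exterior-power irreducibles, so Schur gives $A|_{\mathcal{F}^{(m,n)}}=\alpha_{m,n}I$. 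Third, a finite-rank rotation $u\in U_{res}^0(H)$ mixing one $PH$-basis vector with one $P^{\bot}H$-basis vector produces $\Gamma(u)$ with nonzero matrix coefficients between $\mathcal{F}^{(m,n)}$ and $\mathcal{F}^{(m\pm 1,n\pm 1)}$, forcing $\alpha_{m,n}=\alpha_{m+1,n+1}$; combined with the shift unitary $s$ from the grading proof, which links charge sectors $\mathcal{F}_k$ and $\mathcal{F}_{k+1}$, all the scalars $\alpha_{m,n}$ coincide and $A$ is a multiple of the identity.

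The main obstacle will be the identification $c=i$: the homomorphism $c$ emerges abstractly from Schur's lemma, but matching it to the Fredholm index requires a concrete computation of $\Gamma(s)\Omega$ on a standard index-$\pm 1$ generator, where one must set up the generator cleanly in the present basis, track all the anticommutation signs, and keep the projective scalar ambiguity of $\Gamma(s)$ from confusing the count. The unboundedness of $Q$ and $d\Gamma(X)$ is a secondary technical nuisance, handled by restricting all commutator identities to the algebraic core on which everything is a finite sum.
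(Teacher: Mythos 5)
The paper gives no proof for this proposition --- it is stated as a citation of \cite{HW} --- so your argument is a free-standing reconstruction rather than something to compare against an in-paper proof. The route you take is the standard one (it is essentially how \cite{HW} and Pressley--Segal handle it): for the grading, conjugate the charge operator $Q$ by $\Gamma(u)$, use irreducibility of $\pi_P$ (Schur) to produce a scalar $c(u)$, observe $c$ is a $\mathbb{Z}$-valued homomorphism vanishing on the identity component, and pin it down on one generator of $\pi_0$; for irreducibility, cut the commutant down with the two diagonal circles $e^{i\theta}P+P^\perp$, $P+e^{i\theta}P^\perp$, then with $U(PH)\times U(P^\perp H)$ on each $\mathcal{F}^{(m,n)}$, then splice the sectors together with an off-block rotation and a shift. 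In outline this is correct.

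Three remarks. First, you can bypass the $d\Gamma(X)$ step entirely: if $u=e^{iX}\in U_{res}^0(H)$ with $X\in\mathcal{A}$ then $e^{iX/n}\in U_{res}^0(H)$ as well, so $c(u)=n\,c(e^{iX/n})\in n\mathbb{Z}$ for every $n$, forcing $c\equiv 0$ on $U_{res}^0(H)$ without touching unbounded second-quantised operators at all. Second, the claim that $U(PH)$ acts irreducibly on the Hilbert-space completion of $\wedge^m(PH)$ for infinite-dimensional $PH$ is true but is used without justification; a one-line argument (diagonal unitaries isolate the weight vectors, permutation unitaries act transitively on them) should be recorded. Third --- and this is the only substantive hazard --- the sign in the final identification needs care. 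With the conventions of this paper one finds $\Gamma(u)\mathcal{F}_k\subset\mathcal{F}_{k-c(u)}$, and a direct computation on the one-sided shift $s$ that you propose (in either direction) gives $\Omega_s\in\mathcal{F}^{(1,0)}$ or $\mathcal{F}^{(0,1)}$ while the Fredholm index of $PsP$ comes out with the opposite sign to the charge shift, so the statement as matched must read $-c(u)=i(u)$, not $c(u)=i(u)$. You flagged this sign-tracking as the main obstacle, correctly; just be aware that a na\"{\i}ve reading of your own definitions would give a conclusion inconsistent with the proposition as stated, so either a sign convention must be fixed carefully (e.g.\ whether the index is of $PuP$ on $PH$, and whether $\mathcal{F}_k$ is the $k$- or $(-k)$-eigenspace of $Q$) or one of the two conventions in the proposition should be understood with the opposite sign. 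Once that bookkeeping is done the argument closes.
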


\subsection{The vacuum vector}
Consider the  vacuum vector $\Omega=\Omega_1\otimes\Omega_2\in \mathcal{F}_P$, by Lemma 1.3 and action of $\pi_P(a(f))$, we have
\begin{center}
$\{\pi_P(a(p_{i_1})) \cdots \pi_P(a(p_{i_k})) \pi_P(a(q_{j_1})^*) \cdots \pi_P(a(q_{j_l})^*)\Omega |l,k\in \mathbb{N}\cup\{0\},1\leq i_1<\cdots<i_k,1\leq j_1<\cdots<j_l\}$
\end{center}
forms the orthonormal basis of $\mathcal{F}_P$.
That is to say $\overline{\pi_P(CAR(H))\Omega}^{\left\lVert \cdot \right\rVert}=\mathcal{F}_P$.

\begin{lemma}
The vacuum vector $\Omega=\Omega_1\otimes\Omega_2\in \mathcal{F}_P$ is the unique vector (up to a scalar) that annihilated by $\pi_P(a(p_i)^*),\pi_P(a(q_j))$ for all $i,j\in\mathbb{N}$.
\end{lemma}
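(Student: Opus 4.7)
The plan is to expand an arbitrary candidate vector $v$ in the orthonormal basis of Lemma 2.3 and then use the two annihilation conditions separately to force all coefficients except that of $\Omega$ to vanish. Write
\[
v=\sum_{I,J} c_{I,J}\, p_I\otimes q_J^{*},
\]
where $I=(i_1<\cdots<i_k)$ and $J=(j_1<\cdots<j_l)$ range over finite strictly increasing tuples (including the empty tuple, which gives $\Omega_1$, resp.\ $\Omega_2$), and $p_I=p_{i_1}\wedge\cdots\wedge p_{i_k}$, $q_J^{*}=q_{j_1}^{*}\wedge\cdots\wedge q_{j_l}^{*}$.

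Next I would unpack what the operators look like on the tensor decomposition. Since $Pp_i=p_i$ and $P^{\perp}p_i=0$, the formula $\pi_P(a(f))=a(Pf)\otimes 1+1\otimes a((P^{\perp}f)^{*})^{*}$ gives $\pi_P(a(p_i)^{*})=a(p_i)^{*}\otimes 1$, acting only on the first factor. Similarly $\pi_P(a(q_j))=1\otimes a(q_j^{*})^{*}$, acting only on the second factor. So the two annihilation conditions decouple across the tensor product.

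Now I would apply $\pi_P(a(p_m)^{*})=a(p_m)^{*}\otimes 1$ to $v$ and use the explicit formula for $\pi$: by the orthonormality of the $p_i$'s, $a(p_m)^{*}p_I=\pm p_{I\setminus\{m\}}$ when $m\in I$ and $0$ otherwise. Computing the coefficient of $p_{I'}\otimes q_J^{*}$ in $\pi_P(a(p_m)^{*})v$ picks out only the term with $I=I'\cup\{m\}$ (when $m\notin I'$), yielding $\pm c_{I'\cup\{m\},J}=0$. Since $m\notin I'$ is arbitrary and $I'$ is arbitrary, this forces $c_{I,J}=0$ whenever $I\neq\emptyset$. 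Running the same argument with $\pi_P(a(q_n))=1\otimes a(q_n^{*})^{*}$ on the second factor forces $c_{I,J}=0$ whenever $J\neq\emptyset$. Hence $v=c_{\emptyset,\emptyset}\,\Omega$, as claimed.

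There is no real obstacle here beyond bookkeeping; the argument is essentially an induction on the total degree $k+l$ of a basis vector, repackaged as a coefficient comparison. The one point that needs care is that the two sets of annihilation conditions act on disjoint tensor factors, so their effects are independent and can be applied in succession rather than having to be combined; the signs arising from the exterior algebra are irrelevant since we only need to conclude that certain coefficients are zero.
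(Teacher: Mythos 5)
Your proof is correct and amounts to the same argument as the paper's: both show that a vector annihilated by all $\pi_P(a(p_i)^*)$ and $\pi_P(a(q_j))$ must be orthogonal to every basis vector with $(k,l)\neq(0,0)$. The paper phrases this by pairing $\langle\Omega',v\rangle$ and moving one creation operator across to act as an annihilator on $\Omega'$, while you expand $\Omega'$ in the basis and apply the annihilation operators directly to kill the coefficients; these are dual bookkeepings of the identical computation.
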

\begin{proof}
It is easy to check $\Omega$ is annihilated by these operators.

For the uniqueness, let us consider the space $\Omega^{\perp}$ which is spanned (densely) by
\begin{center}
$\{v=\pi_P(a(p_{i_1})) \cdots \pi_P(a(p_{i_k})) \pi_P(a(q_{j_1})^*) \cdots \pi_P(a(q_{j_l})^*)\Omega |l,k\in \mathbb{N}\cup\{0\},lk\neq 0,1\leq i_1<\cdots<i_k,1\leq j_1<\cdots<j_l\}$
\end{center}
that at least one of $a(p_i)$ or $a(q_j)^*$ appears.
If there is such a vector $\Omega'$, there must be $\langle \Omega',v\rangle=0$ for any $v$ described above.
So $\Omega'\in \mathbb{C}\Omega$.
\end{proof}

Now, let us go back to the implementation of $\{u_g|g\in F\}$.
Suppose each $u_g$ is implemented by $U_g\in U(\mathcal{F}_P)$ such that
\begin{center}
$\pi_P(a(u_gf))U_g=U_g\pi_P(a(f))$ for all $f\in H$.
\end{center}
We have its action on $\Omega$ as
\begin{center}
$\pi_P(a(u_gf))U_g\Omega=U_g\pi_P(a(f))\Omega$ for all $f\in H$.
\end{center}
Now, replace $a(f)$ by $a(p_i)^*$ and $a(q_j)$ with $i,j\in\mathbb{N}$ and define
\begin{center}
$\Omega_g \overset{def}{=} U_g\Omega$
\end{center}

As $U_g$ is invertible, we have
\begin{corollary}
$\Omega_g$ is the unique vector (up to a scalar) that is annihilated by
\begin{center}
$a(u_gp_i)^*$ and $a(u_gq_j)$ with $i,j\in\mathbb{N}$,
\end{center}
\end{corollary}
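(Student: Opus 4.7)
The plan is to transport the uniqueness assertion of Lemma 2.7 along conjugation by the implementing unitary $U_g$, as the author's phrasing ``As $U_g$ is invertible'' already foreshadows.

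First, I would verify that $\Omega_g = U_g \Omega$ really is annihilated by the stated operators. Taking adjoints in the implementation relation $\pi_P(a(u_g f)) U_g = U_g \pi_P(a(f))$ and using that $U_g$ is unitary yields
$$\pi_P(a(u_g f)^*)\, U_g \;=\; U_g\, \pi_P(a(f)^*).$$
Substituting $f=p_i$ and applying both sides to $\Omega$ gives
$$\pi_P(a(u_g p_i)^*)\, \Omega_g \;=\; U_g\, \pi_P(a(p_i)^*)\,\Omega \;=\; 0$$
by Lemma 2.7; a symmetric computation using the original (unstarred) intertwiner with $f=q_j$ shows $\pi_P(a(u_g q_j))\,\Omega_g = 0$.

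For uniqueness I would take an arbitrary $\Omega'\in \mathcal{F}_P$ annihilated by every $\pi_P(a(u_g p_i)^*)$ and every $\pi_P(a(u_g q_j))$, and set $\widetilde{\Omega}:=U_g^{-1}\Omega'$. Running the same intertwining identities in the opposite direction converts these annihilation conditions into $\pi_P(a(p_i)^*)\widetilde{\Omega}=0$ and $\pi_P(a(q_j))\widetilde{\Omega}=0$ for all $i,j$. Lemma 2.7 then forces $\widetilde{\Omega}\in \mathbb{C}\Omega$, whence $\Omega'\in \mathbb{C}\,U_g\Omega = \mathbb{C}\Omega_g$.

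The argument is essentially bookkeeping: no new analytic input is needed beyond Lemma 2.7 and the existence of $U_g$. The only point that requires a little care is keeping the adjoints on the correct side when pushing the intertwiner through $*$, so that the family $\{a(p_i)^*\}$ is handled by the starred form of the relation while $\{a(q_j)\}$ is handled by the unstarred form. I do not foresee a genuine obstacle.
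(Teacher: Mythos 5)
Your proof is correct and fills in precisely the bookkeeping the paper leaves implicit behind the phrase ``As $U_g$ is invertible'': it transports Lemma 2.7 through the intertwining relation $\pi_P(a(u_gf))U_g = U_g\pi_P(a(f))$ and its adjoint, using unitarity of $U_g$ to run the argument in both directions. This is the same approach the paper intends.
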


This give us a first description of the action $U_g$.

Once we get the action of a $U_g$ on $\Omega$, i.e $\Omega_g= U_g\Omega$,
one may wonder the action of $U_g$ on the whole space $\mathcal{F}_P$.

Now, let us go back to the equation
\begin{center}
$\pi_P(a(u_gf))U_g\Omega=U_g\pi_P(a(f))\Omega$
\end{center}
for all $f\in H$.

Take an arbitrary vector from the basis mentioned above, i.e.
\begin{center}
$v=\pi_P(a(p_{i_1})) \cdots \pi_P(a(p_{i_k})) \pi_P(a(q_{j_1})^*) \cdots \pi_P(a(q_{j_l})^*)\Omega $
\end{center}
with $l,k\in \mathbb{N}\cup\{0\},1\leq i_1<\cdots<i_k,1\leq j_1<\cdots<j_l$.
Let $U_g$ act on this vector, we have
\begin{equation*}
\begin{aligned}
U_gv&=U_g\pi_P(a(p_{i_1})) \cdots \pi_P(a(p_{i_k})) \pi_P(a(q_{j_1})^*) \cdots \pi_P(a(q_{j_l})^*)\Omega\\
&=\pi_P(a(u_gp_{i_1}))U_g \pi_P(a(p_{i_2}))\cdots \pi_P(a(p_{i_k})) \pi_P(a(q_{j_1})^*) \cdots \pi_P(a(q_{j_l})^*)\Omega\\
&=\cdots=\pi_P(u_ga(p_{i_1})) \cdots \pi_P(a(u_gp_{i_k})) \pi_P(a(u_gq_{j_1})^*) \cdots \pi_P(a(u_gq_{j_l})^*)U_g\Omega\\
&=\pi_P(u_ga(p_{i_1})) \cdots \pi_P(a(u_gp_{i_k})) \pi_P(a(u_gq_{j_1})^*) \cdots \pi_P(a(u_gq_{j_l})^*)\Omega_g
\end{aligned}
\end{equation*}
which is to say
\begin{corollary}
Once $\Omega_g$ is known, the action of $U_g$ on $\mathcal{F}_P$ is explicit by its action of the orthonormal basis given above.
\end{corollary}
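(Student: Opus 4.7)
The plan is to iteratively push $U_g$ past each creation and annihilation operator appearing in a basis vector, exploiting the intertwining identity $\pi_P(a(u_g f))U_g = U_g\pi_P(a(f))$, until $U_g$ lands on the vacuum $\Omega$ and is replaced by the known vector $\Omega_g$ by definition.

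First, I would record the adjoint form of the defining relation. Taking the adjoint of $\pi_P(a(u_gf))U_g = U_g\pi_P(a(f))$ and using that $U_g$ is unitary, one obtains $\pi_P(a(u_gf)^*)U_g = U_g\pi_P(a(f)^*)$ for every $f\in H$. Thus $U_g$ conjugates both $\pi_P(a(f))$ and $\pi_P(a(f)^*)$ into $\pi_P(a(u_gf))$ and $\pi_P(a(u_gf)^*)$ respectively; this is the only algebraic input required beyond the hypothesis.

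Second, I would take an arbitrary vector
\[
v = \pi_P(a(p_{i_1}))\cdots \pi_P(a(p_{i_k}))\,\pi_P(a(q_{j_1})^*)\cdots \pi_P(a(q_{j_l})^*)\,\Omega
\]
from the canonical orthonormal basis supplied by Lemma~2.3, apply $U_g$ on the left, and use the intertwining relations to move $U_g$ one step to the right at each stage. Each step replaces a factor $\pi_P(a(p_{i_s}))$ by $\pi_P(a(u_gp_{i_s}))$ and each $\pi_P(a(q_{j_t})^*)$ by $\pi_P(a(u_gq_{j_t})^*)$. After $k+l$ commutations, $U_g$ reaches $\Omega$, and invoking the definition $\Omega_g = U_g\Omega$ converts it into $\Omega_g$. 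This produces the explicit formula displayed in the calculation above the corollary statement.

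Finally, since the chosen vectors span a dense subspace of $\mathcal{F}_P$ and $U_g$ is bounded, the resulting formula extends by linearity and continuity to the whole Fock space, so knowledge of $\Omega_g$ together with the data $u_g$ and $\pi_P$ fully determines $U_g$. There is essentially no obstacle: once the adjoint intertwining relation is written down, the proof is a mechanical induction on the total number of factors in a basis vector, and the corollary is best viewed as a clean restatement of the displayed computation that precedes it.
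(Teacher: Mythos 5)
Your proposal is correct and follows essentially the same route as the paper: the paper's argument is precisely the displayed computation that precedes the corollary, pushing $U_g$ rightward past each creation and annihilation factor via the intertwining relation until it reaches $\Omega$ and is replaced by $\Omega_g$. You merely make explicit two points the paper leaves tacit — the adjoint form $\pi_P(a(u_gf)^*)U_g = U_g\pi_P(a(f)^*)$ needed to commute past the $a(q_j)^*$ factors, and the density-plus-boundedness step that extends the formula from basis vectors to all of $\mathcal{F}_P$.
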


\subsection{Implementation of one parameter subgroups}

\begin{theorem}[\cite{L76},Theorem 1, Lemma 3.9]
There exists a map $W:X\mapsto W(sX) $ from $\mathcal{A}$ to $\pi_P(CAR(H))''\subset B(\mathcal{F})$  such that $W(sX)$
is a strongly continuous unitary one parameter subgroup fulfilling
\begin{center}
$\pi_P(a(e^{isX}f))=W(sX)\pi_P(a(f))W(sX)^{-1}$, $f\in H$. 
\end{center}
And the vacumm vector $\Omega\in D((d/ds)W(sX)_{s=0})$. 

Moreover, for $X,Y\in\mathcal{A}$, we have
\begin{center}
$W(tX)W(sY)W(tX)^{-1}=W(e^{itX}sYe^{-itX})e^{ib(tX,sY)}$
\end{center}
where $b(tX,sY)=-2 \operatorname{Im} \int_{0}^{t}\tr(PX(1-P)Ye^{irX}sYe^{-irX})$. 
\end{theorem}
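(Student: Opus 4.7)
The plan is to construct the one-parameter group $s\mapsto W(sX)$ by second quantization. Since $X\in\mathcal{A}$, a Duhamel expansion shows $[e^{isX},P]$ is Hilbert-Schmidt, so $e^{isX}\in U_{res}(H)$ and Corollary 2.8 already supplies implementers; what is nontrivial is choosing them to form a strongly continuous one-parameter group inside $\pi_P(CAR(H))''$, with $\Omega$ in the domain of the generator, and to pin down the cocycle $b$. My approach is to exhibit an explicit self-adjoint generator on a core of finite-particle vectors.

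First I would define, on the finite-particle subspace $\mathcal{D}\subset\mathcal{F}_P$, the normal-ordered quadratic
\begin{equation*}
d\Gamma_P(X)=\sum_{i,j}X^{(1,1)}_{ij}\pi_P(a(p_i)^*a(p_j))-\sum_{i,j}X^{(2,2)}_{ij}\pi_P(a(q_j)a(q_i)^*)+\sum_{i,j}X^{(1,2)}_{ij}\pi_P(a(p_i)^*a(q_j)^*)+h.c.
\end{equation*}
The critical observation is that the off-diagonal (pair-creation / pair-annihilation) part is well-defined precisely because $X^{(1,2)}$ is Hilbert-Schmidt; indeed $d\Gamma_P(X)\Omega=\sum_{i,j}X^{(1,2)}_{ij}\pi_P(a(p_i)^*)\pi_P(a(q_j)^*)\Omega$ has norm $\|X^{(1,2)}\|_{HS}$, so $\Omega\in D(d\Gamma_P(X))$, establishing the second assertion of the theorem. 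A standard Nelson analytic-vector argument on $\mathcal{D}$ yields essential self-adjointness, and I define $W(sX)=\exp(is\,\overline{d\Gamma_P(X)})$. Strong continuity and the one-parameter group law follow from Stone's theorem, and since the truncated sums of $d\Gamma_P(X)$ lie in $\pi_P(CAR(H))$ and converge to $d\Gamma_P(X)$ in strong resolvent sense, $W(sX)$ lies in the bicommutant $\pi_P(CAR(H))''$. The term-by-term CAR identity $[d\Gamma_P(X),\pi_P(a(f))]=-\pi_P(a(Xf))$ on $\mathcal{D}$ then exponentiates to the implementation relation $\pi_P(a(e^{isX}f))=W(sX)\pi_P(a(f))W(sX)^{-1}$.

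For the cocycle identity, observe that both $W(tX)W(sY)W(tX)^{-1}$ and $W(e^{itX}sYe^{-itX})$ implement the same automorphism $a(f)\mapsto a(e^{itX}e^{isY}e^{-itX}f)$ of $CAR(H)$, and by irreducibility (Lemma 2.4) they must agree up to an $S^1$-phase $e^{ib(tX,sY)}$. To identify $b$, I would consider the scalar function
\begin{equation*}
\psi(t)=\bigl\langle\Omega,\,W(tX)W(sY)W(-tX)\,W\bigl(-e^{itX}sYe^{-itX}\bigr)\Omega\bigr\rangle,
\end{equation*}
differentiate in $t$, and use the explicit form of $d\Gamma_P$ together with the action on $\Omega$ computed above. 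Because $\Omega$ is annihilated by the diagonal pieces, only the off-diagonal (pair) terms contribute; the double sum over basis indices collapses via the Hilbert-Schmidt pairing $X^{(1,2)}\leftrightarrow$ (conjugate of conjugated $sY$-block) into a trace over $PH$. Integrating $-i\,d\log\psi/dr$ from $0$ to $t$ with $\psi(0)=1$ yields the stated expression $b(tX,sY)=-2\operatorname{Im}\int_0^t\tr\bigl(PX(1-P)\,Y\,e^{irX}sYe^{-irX}\bigr)\,dr$.

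The main obstacle is the cocycle computation itself. Essential self-adjointness and the passage from commutators on $\mathcal{D}$ to exponentiated identities are standard but technical. The genuinely delicate step is bookkeeping of the normal-ordering corrections and showing that the formally infinite contributions cancel so that the anomaly collapses to exactly the stated finite trace over $PH$; this is where the Hilbert-Schmidt hypotheses on $X^{(1,2)}$ and $Y^{(1,2)}$ are used at their sharpest.
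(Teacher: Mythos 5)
The paper itself offers no proof of this theorem; it is cited directly from Lundberg \cite{L76} (Theorem~1 and Lemma~3.9), so there is no ``paper's own proof'' to compare against. Your overall strategy — build a normal-ordered quadratic $d\Gamma_P(X)$ on the finite-particle vectors, show $\Omega$ lies in its domain via the Hilbert--Schmidt hypothesis on $X^{(1,2)}$, pass through Stone's theorem, and then obtain the cocycle by irreducibility together with a trace computation of the anomaly — is indeed the standard (and essentially Lundberg's) route. But the explicit formula you write for $d\Gamma_P(X)$ is wrong, and the error is not cosmetic: it undermines the one computation you do present.

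Recall that in $\pi_P$ the operators that \emph{annihilate} $\Omega$ are $\pi_P(a(p_i)^*)$ and $\pi_P(a(q_j))$, while $\pi_P(a(p_i))$ and $\pi_P(a(q_j)^*)$ \emph{create}. With that in hand, look at your three blocks. The term $\sum_{ij}X^{(1,1)}_{ij}\pi_P(a(p_i)^*a(p_j))$ has the annihilator on the left of a creator; applied to $\Omega$ it gives $\operatorname{tr}(X^{(1,1)})\,\Omega$, which is divergent for a generic bounded self-adjoint $X$. The term $-\sum_{ij}X^{(2,2)}_{ij}\pi_P(a(q_j)a(q_i)^*)$ is likewise not normal ordered and contributes $-\operatorname{tr}(X^{(2,2)})\,\Omega$. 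Most seriously, the off-diagonal term you write, $\sum_{ij}X^{(1,2)}_{ij}\pi_P(a(p_i)^*a(q_j)^*)$, applied to $\Omega$ gives exactly $0$, because $\pi_P(a(q_j)^*)\Omega=\Omega_1\otimes q_j^*$ and $\pi_P(a(p_i)^*)$ annihilates it. So your displayed identity
\[
d\Gamma_P(X)\Omega=\sum_{i,j}X^{(1,2)}_{ij}\pi_P(a(p_i)^*)\pi_P(a(q_j)^*)\Omega,\qquad
\bigl\|d\Gamma_P(X)\Omega\bigr\|=\|X^{(1,2)}\|_{HS},
\]
is false as stated: the middle expression vanishes. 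The genuine pair-creation piece involves the $(2,1)$ block and the combination $a(q_i)^*a(p_j)$ (both factors are creators under $\pi_P$), giving
$\sum_{ij}X^{(2,1)}_{ij}\,p_j\otimes q_i^*$ on $\Omega$, whose norm is $\|X^{(2,1)}\|_{HS}=\|X^{(1,2)}\|_{HS}$. Correspondingly, the $(1,1)$ and $(2,2)$ blocks must be genuinely normal ordered (e.g.\ $-\pi_P(a(p_j)a(p_i)^*)$ and $\pi_P(a(q_i)^*a(q_j))$ respectively) so that they actually annihilate $\Omega$; only then does the clean dichotomy ``diagonal pieces kill $\Omega$, off-diagonal piece is HS-controlled'' that you invoke later in the cocycle computation hold. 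The subsequent steps (essential self-adjointness, exponentiation, irreducibility giving the $S^1$-ambiguity, and differentiation of $\psi(t)$) are the right moves, but the anomaly computation depends on the corrected operator, so as written the argument does not close.
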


If $X,Y$ above commute, i.e. $[X,Y]=0$, there is a simple formula for the commutator. 
\begin{corollary}
Assume $X,Y\in\mathcal{A}$ with $[X,Y]=0$, then
\begin{center}
$W(X)W(Y)W(X)^{-1}=W(Y)e^{-2i\operatorname{Im}\tr(PX(1-P)Y)}$
\end{center}
\end{corollary}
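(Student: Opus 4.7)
The plan is to derive the corollary as an immediate specialization of the theorem by setting $t=s=1$ and exploiting the commutation $[X,Y]=0$ to collapse both the argument of the $W$ on the right-hand side and the integrand defining the cocycle $b$.

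First I would observe that $[X,Y]=0$ implies $[e^{irX}, Y]=0$ for every real $r$, since $Y$ commutes with each term of the power series for $e^{irX}$. Consequently $e^{irX} Y e^{-irX} = Y$ for all $r\in\mathbb{R}$. Plugging $t=s=1$ into the identity
\[
W(tX)W(sY)W(tX)^{-1} = W(e^{itX} sY e^{-itX})\, e^{ib(tX,sY)}
\]
from the preceding theorem then turns the left-hand side into $W(X)W(Y)W(X)^{-1}$ and the inner $W$ on the right into $W(e^{iX} Y e^{-iX}) = W(Y)$, as desired.

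Next I would simplify the cocycle. Under $[X,Y]=0$ the factor $e^{irX}\,sY\,e^{-irX}$ inside the trace reduces to $sY$, so the integrand in
\[
b(tX,sY) = -2\operatorname{Im}\!\int_0^t \tr\!\big(PX(1-P)\, e^{irX} sY e^{-irX}\big)\,dr
\]
becomes independent of $r$. Specializing to $t=s=1$, the integral evaluates trivially to $\tr(PX(1-P)Y)$, yielding $b(X,Y) = -2\operatorname{Im}\tr(PX(1-P)Y)$. Combining the two simplifications gives exactly the stated identity.

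The only delicate point is making sure the simplification of the trace is legitimate: one must check that $PX(1-P)Y$ is trace class, but this is automatic because $X,Y\in\mathcal{A}$ forces $PX(1-P)$ and $PY(1-P)$ to be Hilbert-Schmidt, and $Y$ being self-adjoint gives $(1-P)Y = ((1-P)YP) + ((1-P)Y(1-P))$, so $PX(1-P)\cdot Y$ is a product of a Hilbert-Schmidt operator with a bounded one followed by contraction with another Hilbert-Schmidt component — a standard trace-class argument. Beyond that bookkeeping, the corollary is a direct substitution, with no further analytic input required.
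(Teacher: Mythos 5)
Your proposal is correct and follows essentially the same route as the paper's proof: use $[X,Y]=0$ to conclude $e^{irX}Ye^{-irX}=Y$, substitute $t=s=1$ into the theorem's commutator formula, and observe the cocycle integral collapses to $\tr(PX(1-P)Y)$. The paper states this in one line; your version simply spells out the intermediate steps and adds a (reasonable) remark about trace-class justification.
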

\begin{proof}
As $[X,Y]=0$, $e^{itX}sYe^{-itX}=sY$. 
Then let $r=s=1$
\end{proof}

This gives the lifting of some commutators of commuting elements in $U_{res}^0(H)$. Here we let $\Gamma$ denote the lifting in $U(\mathcal{F})$.

\begin{corollary}
Assume $u_0,u_1\in U_{res}^0(H)$ and $X_0,X_1\in \mathcal{A}$ such that $u_0=e^{iX_0},u_0=e^{iX_1}$. 
If $[X,Y]=0$, then $[u_0,u_1]=1$ and 
\begin{center}
$[\Gamma(u_0),\Gamma(u_1)]=e^{-2i\operatorname{Im}\tr(PX_0(1-P)X_1)}$
\end{center}
\end{corollary}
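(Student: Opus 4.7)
The plan is to reduce the claim to the preceding corollary by matching the two available lifts. First, I note that since $X_0, X_1 \in \mathcal{A} \subset B(H)$ are bounded self-adjoint operators with $[X_0, X_1] = 0$, their norm-convergent exponential series multiply term-by-term, giving $u_0 u_1 = e^{iX_0}e^{iX_1} = e^{i(X_0+X_1)} = e^{iX_1}e^{iX_0} = u_1 u_0$. Hence $[u_0, u_1] = 1$ in $U(H)$, which settles the first assertion.

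Second, I apply the theorem of Lundberg with the pair $X = X_0$, $Y = X_1$ to produce one-parameter subgroups $s \mapsto W(sX_j)$ in $U(\mathcal{F})$ implementing conjugation by $e^{isX_j}$ on $\pi_P(CAR(H))$ for $j = 0, 1$. At $s = 1$, the unitaries $W(X_j)$ and the projective lifts $\Gamma(u_j) = \Gamma(e^{iX_j})$ both implement the same automorphism $a(f) \mapsto a(u_j f)$ of $CAR(H)$. By the irreducibility of $\pi_P$ noted earlier together with Schur's lemma, any two such implementers differ by a scalar in $S^1$, so $W(X_j) = \lambda_j \, \Gamma(u_j)$ for some $\lambda_j \in S^1$. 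Since the group commutator is insensitive to central scalar factors, this gives
\[
[\Gamma(u_0), \Gamma(u_1)] = [W(X_0), W(X_1)] \in U(\mathcal{F}).
\]

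Third, I invoke the preceding corollary verbatim with $X = X_0$, $Y = X_1$, using the hypothesis $[X_0, X_1] = 0$, to conclude
\[
W(X_0) W(X_1) W(X_0)^{-1} W(X_1)^{-1} = e^{-2i\operatorname{Im}\tr(P X_0 (1-P) X_1)}.
\]
Combining this with the previous display yields the claimed formula.

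No step presents a real obstacle: the statement is essentially a translation of the preceding corollary into the language of $\Gamma$. The only point meriting care is the independence of the projective commutator from the choice of lift, which is immediate from $[\lambda_0 V_0, \lambda_1 V_1] = [V_0, V_1]$ for any $\lambda_0, \lambda_1 \in S^1$.
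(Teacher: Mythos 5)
Your proof is correct and takes exactly the route the paper intends: the paper gives no explicit proof for this corollary, merely the remark that the preceding Lundberg-based commutator formula ``gives the lifting,'' and your argument fills in precisely that reduction. The two details you supply that the paper elides --- the Schur's lemma identification of $W(X_j)$ with a scalar multiple of $\Gamma(u_j)$ via irreducibility of $\pi_P$, and the observation that group commutators are insensitive to central scalar factors --- are the right ones, and you also correctly repair the evident typos in the statement (reading $u_1 = e^{iX_1}$ and $[X_0,X_1]=0$).
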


\section{The Thompson group $F$}

The Thompson group $F$ is a finitely generated group presented by
\begin{center}
$\langle A,B|[AB^{-1},A^{-1}BA]=[AB^{-1},A^{-2}BA^2]=\text{id} \rangle$
\end{center}
And there is also a presentation given by
\begin{center}
$\langle x_0,x_1,\dots|x_i^{-1}x_nx_i=x_{n+1}\text{~for~all~}i<n \rangle$.
\end{center}

We review some basic facts about $F$ that we will need later. Most of these can be found in introductive materials of the Thompson group \cite{Be}\cite{CFP}.

\subsection{Dyadic automorphism presentations of $F$}

\begin{definition}
Let $I,J$ be two real intervals. A homeomorphism $f:I\to J$ is called a dyadic piecewise linear homeomorphism if it satisfies:
\begin{enumerate}
	\item $f$ is piecewise linear with finite many singular points,
	\item the coordinates of each singular points is dyadic rational, i.e. $n/2^m$ for $n,m\in \mathbb{N}$,
	\item each slope of $f$ is an integral power of $2$
\end{enumerate}
And we let $\Aut_{\text{DPL}}(I)$ denote all the dyadic piecewise linear homeomorphisms from the interval $I$ to itself.
\end{definition}

It is well-known that $F=\Aut_{\text{DPL}}([0,1])$.
In this way, the generators $A,B$ can be presented as

\begin{equation*}
A(x)=
\left\{
             \begin{array}{ll}
             x/2, & \text{if~} x\in [0,1/2) \\
             x-1/2, & \text{if~} x\in [1/2,3/4)  \\
             2x-1, & \text{if~} x\in [3/4,1]
             \end{array}
\right.
\end{equation*}

\begin{equation*}
B(x)=
\left\{
             \begin{array}{ll}
             x, & \text{if~} x\in [0,1/2) \\
             x/2+1/4, & \text{if~} x\in [1/2,3/4) \\
             x-1/2, & \text{if~} x\in [3/4,7/8)  \\
             2x-1, & \text{if~} x\in [7/8,1]
             \end{array}
\right.
\end{equation*}
One can check such two maps satisfy
\begin{center}
$[AB^{-1},A^{-1}BA]=[AB^{-1},A^{-2}BA^2]=\text{id}$
\end{center}
and generate the group $F$.

For each $g\in F$, it can be written as a product of powers of $A,B$ and then $g$ also gives an element in $\Aut_{\text{DPL}}(I)$.
\begin{figure}
  \centering
  \includegraphics[width=15cm]{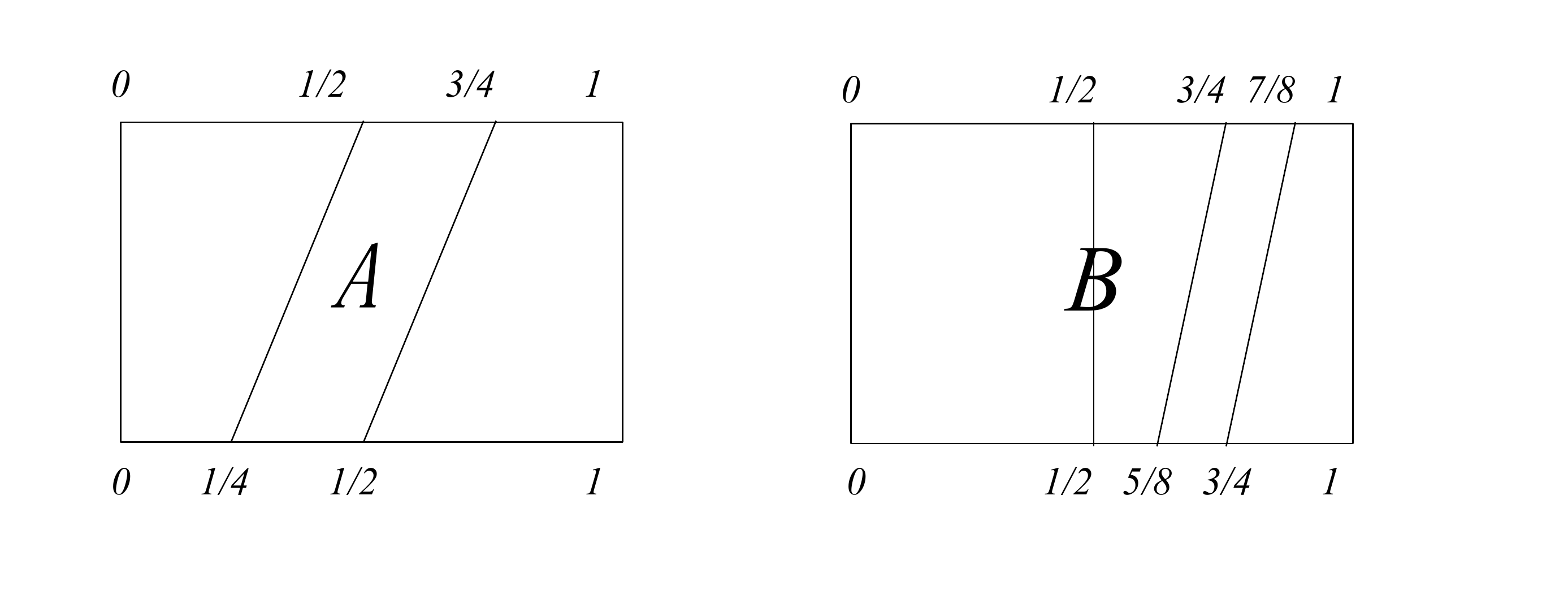}\\
  \caption{The two generators $A,B$}
\end{figure}
This leads to the description of {\it dyadic subdivision} of $[0,1]$ by repeating insertion of midpoint.
For example, the canonical map from one dyadic subdivision to another of equal number of subintervals is an element of $\Aut_{\text{DPL}}([0,1])$.

\begin{definition}
Given any $g\in F$, we define the minimal interval length, denoted by $\mil(g)$, to be the minimal length of interval that contains no singular points.

We also define $n_g=-\log_2 (\mil(g))$ and call this the level of $g$.
\end{definition}
One can easily check $\mil(g)$ is always an integral power of $2$ and hence $n_g\in \mathbb{N}$.
For example, we have $\mil(A)=1/4$, $\mil(B)=1/8$ and $n_A=2,n_B=3$.

\begin{lemma}
For any $g \in F$ with a reduced word form $g=A^{\alpha_1}B^{\beta_1}\cdots A^{\alpha_s}B^{\beta_s}$ ($\alpha_i,\beta_j\in \mathbb{Z}\backslash \{0\}$ with $\alpha_1,\beta_s=0$ allowed), we have
\begin{enumerate}
\item $n_g\leq \sum_{1\leq k \leq s}(2|\alpha_k|+3|\beta_k|)$;
\item $\mil(g)\geq \frac{1}{\prod_{1\leq k\leq s} 4^{|\alpha_k|}8^{|\beta_k|}}$
\end{enumerate}
\end{lemma}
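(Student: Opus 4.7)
Since $n_g = -\log_2 \mil(g)$, parts (1) and (2) are equivalent, so it suffices to prove (1). The plan is to establish a sub-additive bound on the level under composition and then iterate it over the factors of the reduced word. To make the denominator arithmetic clean, I introduce an auxiliary quantity $N_g = \max(n_g, n_{g^{-1}})$. With this definition, all singular points of $g$, in both the domain and range, are dyadic rationals of denominator dividing $2^{N_g}$, and every slope $2^k$ of $g$ satisfies $|k| \le N_g$: on a linear piece of domain-length $L$, one has $L \ge 2^{-n_g}$ and range-length $L\cdot 2^k \ge 2^{-n_{g^{-1}}}$, both being $\le 1$, which yields $k \le n_g$ and $-k \le n_{g^{-1}}$. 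A direct inspection of the formulas for $A, B$ gives $N_A = N_{A^{-1}} = 2$ and $N_B = N_{B^{-1}} = 3$. Since $n_g \le N_g$, it is enough to prove $N_g \le \sum_k (2|\alpha_k| + 3|\beta_k|)$.

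The crux is the sub-additivity $N_{gh} \le N_g + N_h$. Domain singular points of $gh$ split into two classes: domain singular points of $h$ (denominator dividing $2^{N_h}$), and points of the form $h^{-1}(y)$ with $y$ a domain singular point of $g$ (denominator dividing $2^{N_g}$). For the second class, on the piece of $h^{-1}$ containing $y$, we have $h^{-1}(y) = u + 2^{k}(y - v)$, where $u, v$ are adjacent range singular points of $h$ (dyadic with denominator dividing $2^{N_h}$) and $|k| \le N_h$ by the slope bound above. A routine denominator computation shows $h^{-1}(y)$ is dyadic with denominator dividing $2^{N_g + N_h}$. Range singular points of $gh$ are handled by the symmetric argument applied to $g$ acting on the range singular points of $h$. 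Combining the two classes, every singular point of $gh$ has denominator dividing $2^{N_g + N_h}$, giving $N_{gh} \le N_g + N_h$.

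Iterating this bound across the $2s$ factors of $g = A^{\alpha_1}B^{\beta_1}\cdots A^{\alpha_s}B^{\beta_s}$ and further iterating on each $A^{\alpha_k}, B^{\beta_k}$ (using $N_{A^{\pm 1}} = 2$ and $N_{B^{\pm 1}} = 3$) produces $N_g \le \sum_k (2|\alpha_k| + 3|\beta_k|)$. Hence $n_g \le N_g \le \sum_k (2|\alpha_k| + 3|\beta_k|)$, which is (1); then (2) follows from $\mil(g) = 2^{-n_g}$. I expect the main obstacle to be the careful case analysis in the sub-additivity step, in particular tracking how the factor $2^k$ combines with dyadic denominators when the slope exponent $k$ is negative and when a singular point of $h$ nearly coincides with a preimage of a singular point of $g$; encoding everything through the single quantity $N_g$ (rather than $n_g$ alone) is what keeps this bookkeeping manageable.
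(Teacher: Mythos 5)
Your reduction of (2) to (1), the introduction of $N_g = \max(n_g, n_{g^{-1}})$, the slope bound $|k| \le N_g$, the values $N_A=N_{A^{-1}}=2$, $N_B=N_{B^{-1}}=3$, and the iteration scheme from sub-additivity $N_{gh} \le N_g + N_h$ down to the generators are all sound, and the overall strategy is the same inductive idea the paper gestures at (``by induction \ldots using the range of slopes''). However, there is a genuine gap at exactly the step you flag as ``routine.'' With only the stated data --- $u,v$ dyadic with denominator dividing $2^{N_h}$, $y$ with denominator dividing $2^{N_g}$, $|k|\le N_h$ --- the best bound you get on the denominator of $u+2^k(y-v)$ when $k<0$ is $2^{\max(N_g,N_h)+|k|}$, which can be as large as $2^{2N_h}$ and thus exceeds $2^{N_g+N_h}$ whenever $N_h>N_g$. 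So sub-additivity does not follow from the ingredients as you have written them.

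Closing the gap requires the finer structural fact that the domain (and range) singular points of any $g\in F$ form a dyadic subdivision of $[0,1]$ --- a fact you also tacitly need for your earlier assertion that all singular points of $g$ have denominator dividing $2^{N_g}$, which is not implied by the definition of $n_g$ alone. Concretely, let $[v,w]$ be the range interval of $h$ containing $y$; by the subdivision structure it has length $2^{-m}$ with $m\le N_h$ and $v$ is a multiple of $2^{-m}$. The corresponding domain interval has length $2^{-l}$ with $l\le N_h$, $u=h^{-1}(v)$ is a multiple of $2^{-l}$, and the slope of $h^{-1}$ there is $2^k$ with $k=m-l$. Then $2^k v$ is a multiple of $2^{-l}$, so writing $h^{-1}(y)=u+2^k y-2^k v$, the first and third terms have denominator dividing $2^{l}\le 2^{N_h}$ while $2^k y$ has denominator dividing $2^{N_g+\max(0,-k)}\le 2^{N_g+N_h}$. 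This repairs the sub-additivity, and the rest of your argument then goes through.
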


The proof is straightforward by induction of $s$ and $\sum_{1\leq k \leq s}(|\alpha_k|+|\beta_k|)$ using the range of slopes.

There is another presentation of $F$ by binary trees \cite{Be}.
Given a dyadic subdivision of the interval $[0,1]$, there is a obvious binary tree corresponding to it.
And one can shown such a correspondence is one-to-one.
In this way, the group $F$ will acts on the set of binary trees.
Such notations will only be used in chapter 4.1.
\begin{figure}
  \centering
  \includegraphics[width=10cm]{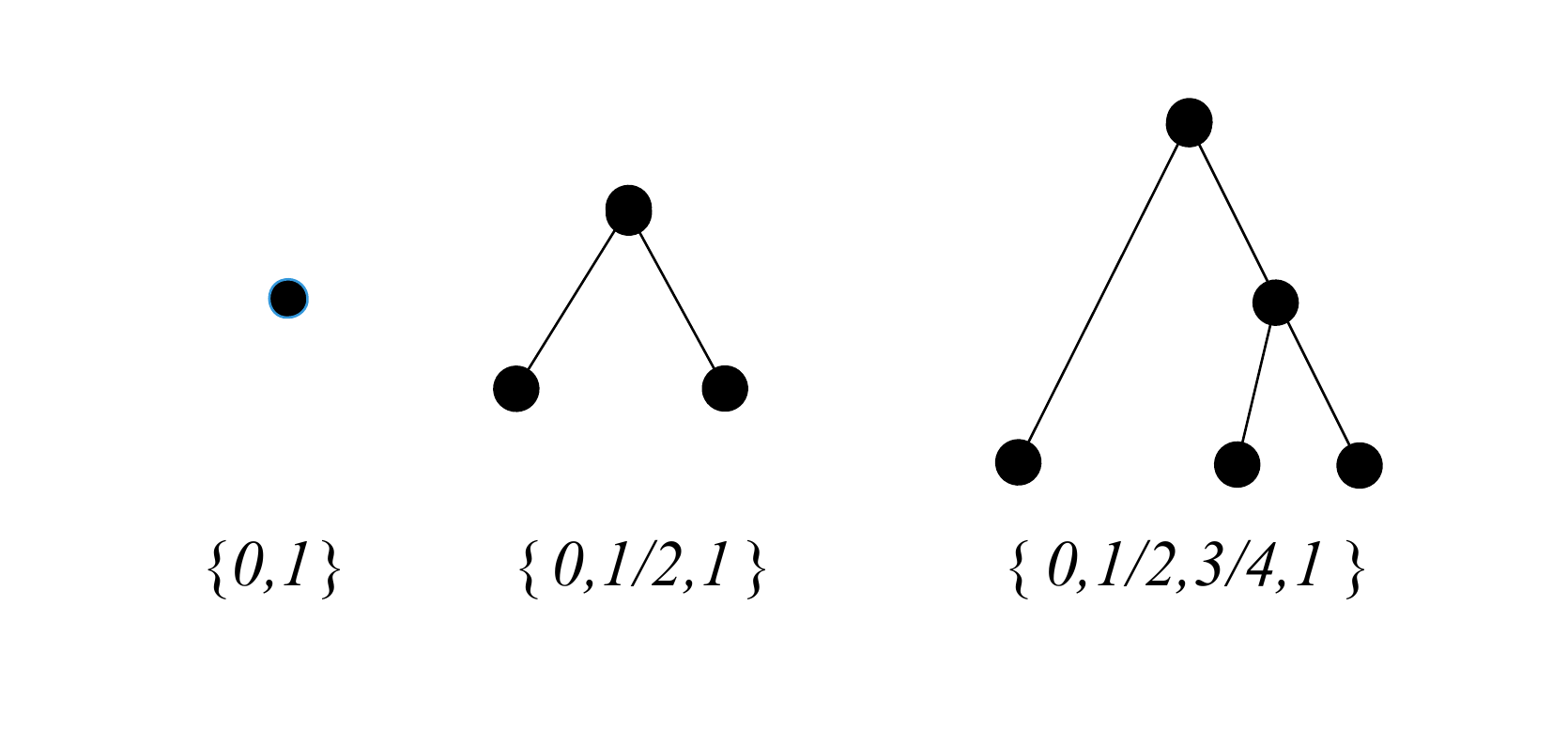}\\
  \caption{Dyadic subdivisions and binary trees correspondence}\label{1}
\end{figure}

\subsection{Koopman representation on $L^2([0,1])$}

There is a canonical Koopman representation $u$ of $F$ on the Hilbert space $H=L^2([0,1])$ which is defined by
\begin{center}
$(u(g)f)(x)=f(g^{-1}x)\sqrt{\frac{dg_{*}\mu}{d\mu}}(x)$, $f\in H$ and $x\in [0,1]$
\end{center}
where $\mu$ is the usual measure of $[0,1]$, $g_{*}\mu$ is defined by $g_{*}\mu(A)=\mu(g^A)$ and the quotient is the Radon-Nikodym Derivative.

Artem Dudko \cite{Du} proved this representation is irreducible by introducing the {\it measure contracting} action
\begin{definition}
A group $G$ acts on a probability space $(X,\mu)$ that is measure class preserving. It is called measure contracting if  for any measurable subset $A\subset X$, any $M,\varepsilon \in \mathbb{R}$, there is $g\in G$ such that
\begin{enumerate}
\item $\mu(\spt(g)\backslash A)<\varepsilon$;
\item $\mu(\{x\in A|\sqrt{\frac{d\mu(g(x))}{d\mu(x)}}<M^{-1}\})>\mu(A)-\varepsilon$.
\end{enumerate}
\end{definition}

The main theorem connecting this definition with irreducibility is
\begin{theorem}[\cite{Du}]
For any ergodic measure contraction action of a group $G$ on a probability space $(X,\mu)$, the associated Koopman representation of $G$ is irreducible.
\end{theorem}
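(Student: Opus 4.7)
The plan is to show that any $T \in B(L^2(X,\mu))$ commuting with $u(g)$ for every $g \in G$ is a scalar multiple of the identity. I would organize the argument in three steps: (i) $T$ commutes with $M_{\chi_A}$ for every measurable $A$; (ii) hence $T = M_h$ for some $h \in L^\infty(X,\mu)$, since multiplication operators form a MASA on $L^2(X,\mu)$; (iii) the covariance $u(g) M_h u(g)^{-1} = M_{h\circ g^{-1}}$ forces $h$ to be $G$-invariant, and ergodicity then makes $h$ constant almost everywhere. Steps (ii) and (iii) are standard; the substance of the theorem lives in step (i), where the measure contracting hypothesis must be used.

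For (i), I would fix a measurable $A$ with $0 < \mu(A) < 1$ and aim to show that $M_{\chi_{A^c}}$ lies in the weak-operator closure of $\{u(g) : g \in G\}$, so that $T$ automatically commutes with it. Applying the measure contracting hypothesis with parameters $\varepsilon_n \to 0$ and $M_n \to \infty$ produces $g_n \in G$ whose support is essentially contained in $A$ and for which $J_n(y)^{1/2} := \sqrt{d\mu(g_n y)/d\mu(y)} < 1/M_n$ on a subset of $A$ of measure at least $\mu(A) - \varepsilon_n$. Since $g_n$ acts as the identity on $A^c$ outside a set of measure $\varepsilon_n$, for $\xi,\eta \in L^\infty \cap L^2$ one has $\int_{A^c} u(g_n)\xi\,\bar\eta\,d\mu \to \int_{A^c}\xi\bar\eta\,d\mu = \langle M_{\chi_{A^c}}\xi,\eta\rangle$, so the task reduces to showing $\int_A u(g_n)\xi\,\bar\eta\,d\mu \to 0$.

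After changing variables $y = g_n^{-1}x$ on $A$, that integral becomes $\int_A \xi(y)\,\overline{\eta(g_n y)}\,J_n(y)^{1/2}\,d\mu(y)$. Split $A$ into the contracting part $A_n^\flat = \{y \in A : J_n(y)^{1/2} < 1/M_n\}$ and its complement $A_n^\sharp$ of measure $< \varepsilon_n$. On $A_n^\flat$ the integrand is uniformly tiny; using that $g_n$ is a bijection of $A$ one bounds $\int_A |\eta(g_n y)|^2\,d\mu \leq \|\eta\|_\infty^2\,\mu(A)$, yielding an $O(1/M_n)$ contribution. On $A_n^\sharp$, Cauchy-Schwarz combined with $\int_A J_n\,d\mu = \mu(g_n(A)) = \mu(A)$ gives an $O(\sqrt{\varepsilon_n})$ contribution. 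Both vanish in the limit, so $u(g_n) \to M_{\chi_{A^c}}$ weakly on the dense subset $L^\infty \cap L^2$, and the uniform bound $\|u(g_n)\|_{\mathrm{op}} = 1$ extends the convergence to all of $L^2(X,\mu)$.

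The main obstacle I foresee is arranging the change of variables and Cauchy-Schwarz splits so that the smallness of $J_n^{1/2}$ on a large subset survives after the unitary normalization of $u(g_n)$ has been unwound; the identity making this work is that $g_n$ maps $A$ bijectively to itself, together with $\int_A J_n\,d\mu = \mu(A)$, which allows the potentially large Jacobian on the exceptional set to be traded against its small measure. A secondary subtlety is that the support of $g_n$ is only approximately contained in $A$, but the excess on $A^c$ has measure $\varepsilon_n$ and contributes $O(\sqrt{\varepsilon_n})$ by unitarity, so this is a cosmetic correction. Once (i) is in hand, (ii) is the usual MASA argument and (iii) is immediate from ergodicity, so the structural heart of the proof is precisely the weak-operator approximation $u(g_n) \to M_{\chi_{A^c}}$ just described.
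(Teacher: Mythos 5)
The paper states this result as a citation from \cite{Du} and gives no proof of its own, so there is no internal argument to compare yours against; I can only assess your proposal on its merits, and it is correct. The reduction you use (the commutant commutes with the multiplication algebra, which is a MASA on $L^2(X,\mu)$, so any commuting $T$ is $M_h$; ergodicity then forces $h$ constant) is standard, and the substance is, as you say, step (i). Your computation there is sound: after the change of variables $x=g_n y$ the integral over $A$ becomes $\int \xi(y)\,\overline{\eta(g_n y)}\,\sqrt{J_n(y)}\,d\mu(y)$; the bulk of $A$ contributes $O(1/M_n)$ since $\sqrt{J_n}<M_n^{-1}$ there, and the exceptional set of measure $<\varepsilon_n$ contributes $O(\sqrt{\varepsilon_n})$ via Cauchy--Schwarz together with $\int_A J_n\,d\mu=\mu(g_nA)\le 1$. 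You also correctly flag and dispatch the two soft points, namely that $\operatorname{supp}(g_n)$ may exceed $A$ by a set of measure $<\varepsilon_n$ (the excess contributes $O(\sqrt{\varepsilon_n})$ by unitarity), and that the uniform bound $\|u(g_n)\|=1$ promotes the weak limit from $L^\infty\cap L^2$ to all of $L^2$. Two minor caveats if you write this up: the identity $\mu(g_n A)=\mu(A)$ you invoke is only approximate, since $g_n$ need not preserve $A$ exactly, but you only need $\mu(g_n A)\le 1$, which is free; and the MASA step tacitly assumes $(X,\mu)$ is a standard probability space, which holds in this paper's setting ($X=[0,1]$ with Lebesgue measure) but should be stated for the general theorem. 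Given that the measure-contracting hypothesis was evidently designed precisely to place indicator multiplications in the weak-operator closure of $\{u(g)\}$, your route is very likely the same as Dudko's.
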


Then, by checking the Koopman representation $u:F\to U(H)$ is ergodic and measure contracting, $u$ is irreducible as a corollary.

\subsection{Cohomology groups of $F$}

We will use classifying space to get the homology and cohomology groups of $F$.
Given a group $G$, there is a classifying space $BG$ whose homology and cohomology groups are the same as $G$ \cite{Ro}.
But we will mainly discuss the classifying space of a given small category defined by Quillen (see \cite{Qui} or \cite{Sri}).

Let $\mathcal{C}$ be a small category. The nerve of $\mathcal{C}$, denoted $N\mathcal{C}$, is defined to be the following simplicial set:
\begin{center}
$A_1\stackrel{f_1}{\longrightarrow}A_2\stackrel{f_1}{\longrightarrow}\cdots  A_{n-1}\stackrel{f_{n-1}}{\longrightarrow}A_n$,
\end{center}
where the $A_i$'s are objects in $\mathcal{C}$ and each $f_i:A_i\to A_{i+1}$ is a morphism.
Then the geometric realization of $N\mathcal{C}$ is defined to be $B\mathcal{C}$.

The most basic example is the category $\mathcal{C}_0$ of a partially ordering set of two element $\{0,1\}$ with $0\leq 1$.
Then there is only one simplex $0\to 1$ and hence we have $B\mathcal{C}_0=[0,1]$.

Kenneth Brown \cite{Br} gave the structure of cohomology ring $H^*(F;\mathbb{Z})$ by the homology of classifying spaces of several categories.
Here, as a review, we will give a quick and direct outline of the proof with \cite{Qui}.

Firstly, there are three categories related to $F$
\begin{enumerate}
\item The category $\mathcal{F}$

Let $\mathcal{F}$ be a (small) category whose objects are intervals $[0,n]$ ($n\geq 1$) and morphisms are the dyadic piecewise linear homeomorphisms.
Let $|\mathcal{F}|$ be its geometric realization for small categories defined by Quillen \cite{Qui}.
It is obvious the automorphism group of one object is exactly $F$.
Then, by \cite{Qui} again, $|\mathcal{F}|$ is an Eilenberg-MacLane complex of type $K(F,1)$.

\item The category $\mathcal{S}$

Let $\mathcal{S}$ be a (small) category whose objects are the same as $\mathcal{F}$.
But its morphisms are restricted to be {\it subdivision maps}: For any two intervals $[0,n+k],[0,n]$ ($k\leq 1-n$), take dyadic subdivision (section 2.2) of $[0,n+k]$ into $[0,1],\dots,[n+k-1,n+k]$ and $ [0,n]$ into any dyadic subdivision with $n+k$ parts, then the dyadic piecewise linear homeomorphisms is defined by correspondence of two pairs of $n+k+1$ points in $[0,n+k],[0,n]$ respectively.
$|\mathcal{S}|$ is an Eilenberg-MacLane complex of type $K(F,1)$.

\item The category $\mathcal{B}$

The category $\mathcal{B}$ is a POSET whose objects are any binary trees.
And for two objects $B,C$, $B\leq C$ if $C$ can be extended from $B$ by the usual binary tree expansion.
There is an obvious action of $F$ on $\mathcal{B}$.
And the geometric realization $|\mathcal{B}|$ is contractible with a free $F$ action and the quotient is just $|\mathcal{S}|$.
\end{enumerate}

With these three categories and geometric realizations above, we will introduce a new complex $X$, from which we can compute the homology and cohomology groups of $F$.

Let $L$ be a binary forest.
An {\it elementary expansion} of $L$ is expansion of $L$ of some different nodes belong to $L$, but not any expansion at a new node.
This implies the hight will plus at most one after this expansion.
We will write $L\preceq M$ if $M$ is an elementary expansion of $L$.
Note that $L=L_0 \preceq L_1 \preceq \cdots \preceq L_p=M $ will not imply $L\preceq M$.
But $L=L_0 \leq L_1 \leq \cdots \leq L_p=M$ with $L\preceq M$ will imply $L=L_0 \preceq L_1 \preceq \cdots \preceq L_p=M$.
By \cite{Qui} again, the elementary expansions form the elementary simplices $\tilde{X}$ which is an $F$-invariant subcomplex of $|\mathcal{B}|$.

Then, by passing to the quotient of action by $F$, we get a subcomplex $X\subset |\mathcal{S}|$.
It has one cell for each chain $L=L_0 \leq L_1 \leq \cdots \leq L_p=M$ with $L\preceq M$.
And by \cite{Ste}, we can decompose $X$ into cubes.

Recall that the geometric realization of the POSET $\{0,1\}$ is $[0,1]$.
Given any $L\preceq M$ where $M$ is a $k$-fold elementary expansion of $L$, let $[L,M]$ denotes all chains of elementary expansions from $L$ to $M$.
We have the following interesting result:
\begin{center}
the geometric realization $|[L,M]|=[0,1]^k$ as $[L,M]\simeq \{0,1\}^k$.
\end{center}
And the relative interior of this $k$-cube is the union of the open simplices corresponding to the chain $L=L_0 \leq L_1 \leq \cdots \leq L_p=M$ with $L\preceq M$.

Moreover, there is a natural product $\mathcal{F}\times \mathcal{F}\to \mathcal{F}$ by just gluing the objects and connect the two morphisms.
This makes $\mathcal{F}$ a semigroup with $|\mathcal{S}|,X$ as subsemigroups.
And as \cite{Br}, one can check $X$ is finite generated as semigroups by two elements $v,e$ (Figure 3):

\begin{figure}
  \centering
  \includegraphics[width=10cm]{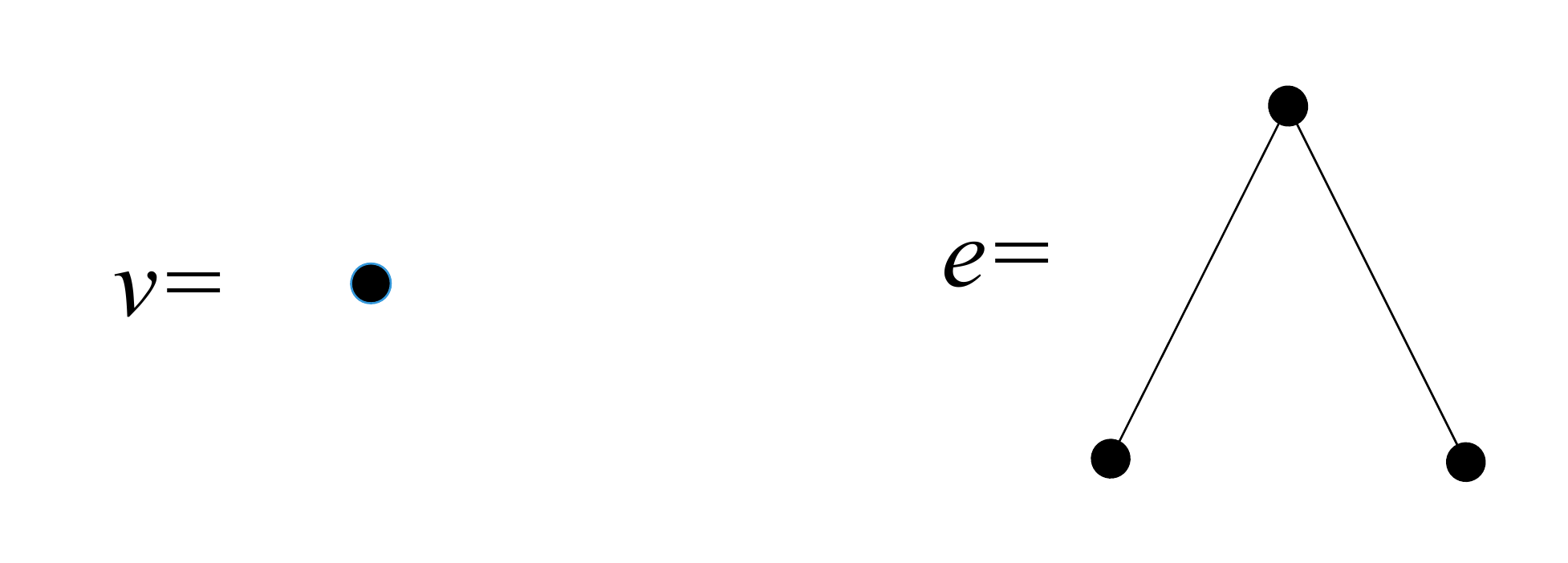}\\
  \caption{The two generators $v,e$}
\end{figure}

Let $C=C(X)$ be the cellular chain complex of $X$.
Then $C$ is a differential graded ring without identity.
One can check $\deg(v)=0,\deg(e)=1$ and the differential rule:
\begin{center}
$\partial(e)=v^2-v$ and $\partial(xy)=\partial(x)\cdot y+(-1)^{\deg(x)} x\cdot \partial(y)$.
\end{center}

Then, follows \cite{Br}, we have:
\begin{proposition}
There is are ring isomorphisms
\begin{enumerate}
\item $H_*(F;\mathbb{Z})\cong \mathbb{Z}[\varepsilon,\zeta]$ with relations  $\varepsilon^2=\varepsilon,\varepsilon\zeta=\zeta-\zeta\varepsilon$.
\item $H^*(F;\mathbb{Z})\cong\Gamma(u)\bigotimes \bigwedge (a,b)$ with $\deg{a}=\deg{b}=1$ and $\deg{u}=2$, where $\Gamma(u)$ is the ring generated by $u^{(i)}=u^i/i!$ over $\mathbb{Z}$.
\end{enumerate}
\end{proposition}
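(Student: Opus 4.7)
The plan is to identify $X$ as an Eilenberg--MacLane space $K(F,1)$ and compute $H_*(F;\mathbb{Z})$ as the homology of the differential graded semigroup ring $C=C(X)$ on the generators $v,e$, then extract the cohomology ring by duality. All three necessary ingredients---namely $|\mathcal{S}|\simeq K(F,1)$, the contractibility of $|\mathcal{B}|$, and the chain-level presentation of $C(X)$---have already been assembled in the exposition; what remains is to read off the ring structure.

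First I would upgrade $X$ to a $K(F,1)$. Since $|\mathcal{B}|$ is contractible with free $F$-action and quotient $|\mathcal{S}|$, the space $|\mathcal{S}|$ is already known to be a $K(F,1)$. Using the cube decomposition $|[L,M]|\cong [0,1]^k$, each cube contracts onto its elementary-expansion spine, and assembling these contractions gives a deformation retraction $|\mathcal{S}|\to X$, whence $H_*(X;\mathbb{Z})\cong H_*(F;\mathbb{Z})$ and likewise for cohomology.

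Second, I would compute $H_*(C)$ as a ring. A $\mathbb{Z}$-basis of $C$ is indexed by nonempty monomials in $v$ and $e$, with $\deg v=0$, $\deg e=1$, $\partial v=0$, $\partial e=v^2-v$, and the graded Leibniz rule extending the differential to all monomials. Setting $\varepsilon:=[v]$, the identity $\partial e=v^2-v$ gives $\varepsilon^2=\varepsilon$ immediately. Setting $\zeta:=[e]$, the Leibniz computation
\begin{equation*}
\partial(ve-ev)=v(v^2-v)-(v^2-v)v=0
\end{equation*}
shows $ve-ev$ is a cycle, and combining this with $\partial(e^2)=(v^2-v)e-e(v^2-v)$ together with comparable identities at the chain level extracts the relation $\varepsilon\zeta=\zeta-\zeta\varepsilon$. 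To rule out further relations, I would filter $C$ by $e$-count and run the associated spectral sequence; the $E_1$-differential encodes exactly the two relations above, and the resulting graded abelian group in each degree matches the free module on monomials in $\varepsilon,\zeta$ in the presented ring $\mathbb{Z}\langle\varepsilon,\zeta\rangle/(\varepsilon^2-\varepsilon,\ \varepsilon\zeta-\zeta+\zeta\varepsilon)$. This settles part (1).

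Third, for part (2), I would dualize. The explicit basis produced in part (1) shows that $H_n(F;\mathbb{Z})$ is free of finite rank in each degree, so the universal coefficient theorem gives $H^n(F;\mathbb{Z})=\Hom(H_n(F;\mathbb{Z}),\mathbb{Z})$ as abelian groups, with the cup product on the right dual to the coproduct on $C(X)$ induced by the diagonal. A direct linear-algebraic computation then converts the presented ring of part (1) into its graded dual: two independent classes in degree $1$ become exterior generators $a,b$, a single primitive class in degree $2$ becomes $u$, and the divided-power structure $\Gamma(u)$ arises because the coproduct sends $\zeta^i$ to $\sum\binom{i}{k}\zeta^k\otimes \zeta^{i-k}$, so the dual of $\zeta^i$ satisfies $u^i=i!\,u^{(i)}$ in integral cohomology. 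The main obstacle is the chain-level bookkeeping in the second step: producing compatible cycle representatives for $\varepsilon$ and $\zeta$, verifying both relations on the nose, and proving via the spectral sequence that no hidden relations survive. Once the homology ring is pinned down, the deformation retraction to a $K(F,1)$ and the duality argument for cohomology are comparatively routine.
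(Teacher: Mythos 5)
The paper itself offers no proof of this proposition; after setting up the categories $\mathcal{F},\mathcal{S},\mathcal{B}$, the complex $X$, and the generators $v,e$ of the DGA $C(X)$, it simply cites Brown \cite{Br}. Your proposal attempts to reconstruct that missing argument, and the overall architecture (deformation retract $|\mathcal{S}|\to X$, homology of the DGA $C(X)$, dualize) is the right one. However there is a concrete gap at the central step, and a couple of the later steps are not fleshed out enough to count as a proof.

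The essential gap is the assignment $\zeta := [e]$. Since $\partial e = v^2 - v \neq 0$, the 1-chain $e$ is not a cycle, so $[e]$ simply does not exist as a homology class. Your own computations touch this without resolving it: you verify that $ve - ev$ is a cycle, but you never produce a genuine 1-cycle representing $\zeta$ (and $\zeta$ is supposed to be a second independent generator of $H_1 \cong \mathbb{Z}^2$, not the single class $[ve-ev]$). The relation $\varepsilon\zeta = \zeta - \zeta\varepsilon$ is then being asserted as a consequence of ``comparable identities at the chain level,'' but the naive chain $ve + ev - e$ is again not a cycle ($\partial(ve+ev-e)=2v^3-3v^2+v$), so there is no chain-level identity of the kind you gesture at. Brown's actual computation handles this by a nontrivial collapsing/Morse-style argument and a careful choice of cycle representatives, not by reading off $[v]$ and $[e]$; your sketch papers over precisely the place where the content lies. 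Separately, the one-line spectral sequence claim does not establish the absence of further relations (you would at minimum need to exhibit the $E_1$-page, the surviving generators, and the Betti numbers $b_0=1$, $b_n=2$ for $n\geq1$), and the deformation retraction $|\mathcal{S}|\to X$ via ``each cube contracts onto its spine'' is asserted rather than proved. Finally, in part (2) the passage from the Pontryagin ring structure on $H_*$ to the cup-product ring on $H^*$ requires knowing the coalgebra (diagonal) structure on $H_*(F)$, not merely the product; your formula for the coproduct of $\zeta^i$ is exactly what needs to be established and is not a formal consequence of part (1). In short: right roadmap, but the key identification of generators and relations in $H_*(C)$ is circular as written, and the cohomological dualization needs the coalgebra input you have not supplied.
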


So there is $H^2(F;\mathbb{Z})=\mathbb{Z}\bigoplus\mathbb{Z}$ from the degree$=2$ terms.
Then, by the universal coefficient theorem \cite{Wei}
\begin{center}
$0\rightarrow \Ext_R^1(H_{n-1}(P),M)\rightarrow H^n(\Hom_R(P,M))\rightarrow \Hom_R(H_n(P),M)\rightarrow 0$
\end{center}
with $R=\mathbb{Z},M=S^1,n=2$ and $\Ext_\mathbb{Z}^1(H_{1}(F),S^1)=0,\Hom_\mathbb{Z}(H_2(F),S^1)=S^1\times S^1$ from the proposition above.
Then we have
\begin{corollary}
$H^2(F;S^1)=S^1\times S^1$.
\end{corollary}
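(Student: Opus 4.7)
The plan is to apply the universal coefficient theorem stated immediately above; the argument is then almost pure bookkeeping.

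First I would extract the two integral homology groups that enter the sequence. Dualizing the cohomology ring $\Gamma(u) \otimes \bigwedge(a,b)$ with $\deg a = \deg b = 1$ and $\deg u = 2$, and cross-checking against the graded ring $\mathbb{Z}[\varepsilon,\zeta]$ given in the preceding proposition, one reads off $H_1(F;\mathbb{Z}) \cong \mathbb{Z}^2$ and $H_2(F;\mathbb{Z}) \cong \mathbb{Z}^2$, the latter spanned by classes dual to $ab$ and to $u$.

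Second, I would dispatch the Ext term. Either observation suffices: $H_1(F;\mathbb{Z}) \cong \mathbb{Z}^2$ is free abelian, so $\Ext^1_\mathbb{Z}(H_1(F;\mathbb{Z}), -)$ is the zero functor; alternatively, $S^1 = \mathbb{R}/\mathbb{Z}$ is a divisible abelian group and hence injective as a $\mathbb{Z}$-module, so $\Ext^1_\mathbb{Z}(-, S^1) = 0$ identically.

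Finally, the Hom term computes to $\Hom_\mathbb{Z}(\mathbb{Z}^2, S^1) \cong S^1 \times S^1$, and the short exact sequence from the universal coefficient theorem collapses to the stated isomorphism $H^2(F;S^1) \cong S^1 \times S^1$. The only mildly delicate step is parsing the presentation of $\mathbb{Z}[\varepsilon,\zeta]$ to identify $H_2(F;\mathbb{Z})$ in the correct degree; the cohomological side provides a straightforward sanity check, so I do not anticipate any real obstacle here.
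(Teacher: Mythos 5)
Your proof is correct and takes essentially the same route as the paper: both apply the universal coefficient theorem with $\Ext^1_{\mathbb{Z}}(H_1(F),S^1)=0$ and $\Hom_{\mathbb{Z}}(H_2(F),S^1)\cong S^1\times S^1$ read off from Brown's computation. Your two alternative justifications for the vanishing of the Ext term ($H_1(F)\cong\mathbb{Z}^2$ free, or $S^1$ divisible hence injective) are a welcome bit of extra rigor that the paper leaves implicit.
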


That is to say not every projective unitary representation of $F$ can be lifted to a unitary one. We will check in details whether the representation $\Gamma$ can be lifted.

\section{A Projective Representation of $F$}

\subsection{Construction of the projective representation }

Consider a basis of $H=L^2([0,1])$ with dyadic support defined by
\begin{equation*}
\begin{aligned}
&f_{0,0}=1_{[0,1]}\\
&f_{1,0}=1_{[0,1/2)}-1_{[1/2,1]}\\
&f_{2,0}=\sqrt{2}\cdot 1_{[0,1/4)}-\sqrt{2}\cdot 1_{[1/4,1/2)},~~f_{2,1}=\sqrt{2}\cdot 1_{[1/2,3/4)}-\sqrt{2}\cdot 1_{[3/4,1]}
\end{aligned}
\end{equation*}
and $f_{n,k}=\sqrt{2}^{n-1}\cdot 1_{[2k/2^n,2k+1/2^n)}-\sqrt{2}^{n-1}\cdot 1_{[2k+1/2^n,2k+2/2^n)}$ is defined for all $n\in\mathbb{N},0\leq k \leq 2^{n-1}-1$.

One can check this forms an orthonormal basis of $H$.
It can also be renumbered lexicographically as $\{f_i\}_{i\in \mathbb{N}}$.
We will construct another othonormal basis $\{p_{n,t},q_{n,t}\}_{n\in \mathbb{N},0\leq t\leq 2^{n-2}-1}$ from $\{f_{n,k}\}$.
\begin{equation*}
\begin{aligned}
p_{1,0}&=\frac{\sqrt{2}}{2}(f_{0,0}+f_{1,0}),&q_{1,0}&=\frac{\sqrt{2}}{2}(f_{0,0}-f_{1,0})\\
p_{2,0}&=\frac{\sqrt{2}}{2}(f_{2,0}+f_{2,1}),&q_{2,0}&=\frac{\sqrt{2}}{2}(f_{2,0}-f_{2,1})\\
p_{n,t}&=\frac{\sqrt{2}}{2}(f_{n,2t}+f_{n,2t+1}),&q_{n,t}&=\frac{\sqrt{2}}{2}(f_{n,2t}-f_{n,2t+1})
\end{aligned}
\end{equation*}
and we also renumber it lexicographically as $\{p_i,q_i\}_{i\in \mathbb{N}}$.

Let $K_n$ be the subspace of $H$ spanned by $\{p_{n,t}\}_{0\leq t\leq 2^{n-2}-1}$.
Let $K$ be the subspace of $H$ spanned by $\{p_i\}_{i\in \mathbb{N}}$ and $P\in \mathcal{B}(H)$ is the projection onto $K$.
We have that $\ker{P}=\spn\{q_i\}_{i\in \mathbb{N}}$.
The question is that whether the Koopman representation of $u:F\to \mathcal{U}(H)$ (via $g\mapsto u_g$) can be implemented in $\mathcal{F}_P$.

\begin{lemma}
For any $g\in F$, there is a $n_g\in \mathbb{N}$ such that $u_g(p_{n,t})\in K$ and $u_g(q_{n,t})\in K^{\perp}$ for all $n\geq n_g$.
\end{lemma}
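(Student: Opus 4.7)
The plan is to describe $u_g$ explicitly on the basis vectors $p_{n,t}$ and $q_{n,t}$ once $n$ is large enough, and to show that $u_g$ merely permutes these basis vectors, sending $p$'s to $p$'s and $q$'s to $q$'s.

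First, I would unwind the definitions to record that, for $n\geq 2$, both $p_{n,t}$ and $q_{n,t}$ are supported on the single dyadic interval $I_{n,t}:=[t/2^{n-2},(t+1)/2^{n-2})$. Partitioning $I_{n,t}$ into its four equal dyadic sub-intervals of length $1/2^{n}$, the function $p_{n,t}$ takes the constant values $(+,-,+,-)\cdot \sqrt{2}^{\,n-2}$ on them, while $q_{n,t}$ takes $(+,-,-,+)\cdot \sqrt{2}^{\,n-2}$. So each basis vector is a single piecewise-constant ``wavelet'' of a specific shape attached to a dyadic interval, and $p$ is distinguished from $q$ purely by its sign pattern.

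Next, I would choose $n_g$ large enough that (i) every breakpoint of $g$ is of the form $m/2^{n_g-2}$, which is possible since $g$ has only finitely many singular points and they are all dyadic rationals (one may use the level $-\log_2\mil(g)$ from Section~3 and inflate by a constant), and (ii) $n_g$ exceeds $2+\max|k|$ over all slopes $2^{k}$ occurring in $g$. Then for $n\geq n_g$ and any valid $t$, the interior of $I_{n,t}$ contains no singular point of $g$, so $g$ is affine on $I_{n,t}$ with some integer $k$-slope $2^{k}$; consequently $g(I_{n,t})$ is again a dyadic interval of length $1/2^{n-2-k}$, and may therefore be written as $I_{n-k,t'}$ for a valid index $t'$.

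Finally, a direct calculation using $(u_g f)(x)=f(g^{-1}(x))/\sqrt{|g'(g^{-1}(x))|}$ gives, for $x\in g(I_{n,t})$, the identity $(u_g p_{n,t})(x)=2^{-k/2}\,p_{n,t}(g^{-1}(x))$. Because $g$ is orientation-preserving and affine on $I_{n,t}$, this pushforward takes values $(+,-,+,-)\cdot \sqrt{2}^{\,(n-k)-2}$ on the four equal sub-intervals of $I_{n-k,t'}$, which is exactly the formula for $p_{n-k,t'}$; identically, $u_g q_{n,t}=q_{n-k,t'}$. The main obstacle is really just the careful bookkeeping of how the sign pattern and normalization of $p_{n,t}$ (respectively $q_{n,t}$) transform under the affine map $g$: the assumption $n\geq n_g$ ensures no singular point lies in the interior of $I_{n,t}$, which is precisely what prevents any mixing between the $p$-type and $q$-type shapes. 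The conclusion $u_g p_{n,t}\in K$ and $u_g q_{n,t}\in K^\perp$ then follows immediately.
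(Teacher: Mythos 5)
Your argument is essentially the same as the paper's: once $n$ exceeds the level of $g$, the support $I_{n,t}$ of $p_{n,t}$ contains no singular point, so $g$ is affine there and maps the wavelet to another basis wavelet of the same type. You usefully make explicit what the paper's proof only asserts, namely that the $(+,-,+,-)$ pattern of $p$ and the $(+,-,-,+)$ pattern of $q$ are each preserved under an orientation-preserving affine dyadic map, which is the real content of ``$u_g(p_{n,t})\in K$''.

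One step deserves a little more care: you conclude that $g(I_{n,t})$ ``may therefore be written as $I_{n-k,t'}$'' from the fact that it is a dyadic interval of length $2^{-(n-2-k)}$. That alone is not sufficient (a dyadic interval of length $1/4$ need not be $[t/4,(t+1)/4]$): you also need the left endpoint $g(t/2^{n-2})$ to be an integer multiple of $2^{-(n-k-2)}$. Your condition (i) controls the \emph{domain}-side breakpoints $x_i$ of $g$; to get the endpoint alignment you also need the \emph{range}-side breakpoints $y_i=g(x_i)$ to be multiples of $2^{-(n_g-2)}$ (equivalently, bound the level of $g^{-1}$ as well). This is easily absorbed into your ``inflate by a constant'' remark, and it is the same imprecision present in the paper's own choice of $N_g$, but it is worth stating since it is exactly the point where ``dyadic interval of the right length'' becomes ``a basis interval $I_{n-k,t'}$''.
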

\begin{proof}
Given $g\in F$, suppose $g$ can be presented by a reduced form of $A^{\alpha_1}B^{\beta_1}\cdots A^{\alpha_s}B^{\beta_s}$.
By lemma 2.1, We have that the minimal interval length $mil(g)\geq \frac{1}{\prod_{1\leq k\leq s} 4^{|\alpha_k|}8^{|\beta_k|}}$.
Take $N_g$ to be the $2+\min\{n\in\mathbb{N}|\frac{1}{n}<mil(g)\}$.

Then, for $p_{n,t}\in K_n$ with $n\geq N_g$, we have all $\spt{p_{n,t}}$ are the dyadic intervals of $g$ without singularities.
This implies
\begin{center}
$u_g(p_{n,t})\in \bigoplus _{i=-n_g}^{n_g}K_{n+i}\subset K$.
\end{center}
and the proof is similar for $q_{n,t}$.
\end{proof}

\begin{lemma}
For any $g\in F$, $[u_g,P]$ is a Hilbert-Schmidt operator.
\end{lemma}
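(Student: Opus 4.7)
The plan is to prove the stronger claim that $[u_g,P]$ is a \emph{finite rank} operator, which in particular is Hilbert--Schmidt. The preceding lemma does essentially all the work; what remains is only to test the commutator against the distinguished orthonormal basis $\{p_{n,t}, q_{n,t}\}$ of $H$.

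First I would compute the commutator on each basis vector. Since $P$ is the orthogonal projection onto $K = \spn\{p_i\}_{i\in\mathbb{N}}$ with kernel $\spn\{q_i\}_{i\in\mathbb{N}}$, we have $P p_{n,t} = p_{n,t}$ and $P q_{n,t} = 0$. Therefore
\begin{equation*}
[u_g,P]\,p_{n,t} = u_g p_{n,t} - P u_g p_{n,t} = (1-P)\,u_g p_{n,t}, \qquad [u_g,P]\,q_{n,t} = -P\,u_g q_{n,t}.
\end{equation*}
By the preceding lemma, there exists $N_g \in \mathbb{N}$ such that $u_g p_{n,t} \in K$ and $u_g q_{n,t} \in K^\perp$ whenever $n \geq N_g$; both right-hand sides above then vanish identically for $n \geq N_g$.

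Next I would note that the set of indices $(n,t)$ with $n < N_g$ is finite, since only $2^{n-2}$ values of $t$ occur at level $n$. Hence $[u_g,P]$ annihilates all but finitely many elements of our orthonormal basis of $H$, and therefore has finite rank. In particular the Hilbert--Schmidt norm
\begin{equation*}
\|[u_g,P]\|_{\mathrm{HS}}^2 = \sum_{n,t}\bigl(\|[u_g,P]p_{n,t}\|^2 + \|[u_g,P]q_{n,t}\|^2\bigr)
\end{equation*}
reduces to a finite sum and is finite, so $[u_g,P]$ is Hilbert--Schmidt.

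There is no genuine obstacle in this argument: everything difficult has been absorbed into the previous lemma. The only point worth emphasizing is that a single cutoff $N_g$ works simultaneously for the $p_{n,t}$ and the $q_{n,t}$ halves of the basis, which is exactly what forces $[u_g,P]$ to be finite rank rather than merely compact; this uniformity is already built into the statement we are quoting.
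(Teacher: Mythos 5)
Your proof is correct and is essentially the same argument the paper gives: both rely on the preceding lemma to produce a uniform cutoff $N_g$ beyond which $[u_g,P]$ annihilates the basis vectors, whence the Hilbert--Schmidt sum is finite. Your phrasing in terms of finite rank is a clean (and strictly stronger) repackaging of the same observation, since a bounded operator that kills all but finitely many vectors of an orthonormal basis has range contained in the span of finitely many images.
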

\begin{proof}
By the lemma above, there is $n_g\in\mathbb{N}$ such that $u_g(p_{n,t})\in K$ and $u_g(q_{n,t})\in K^\perp$ for all $n\geq n_g$.
After renumbering, there is $N_g\in \mathbb{N}$ such that $u_g(p_k)\in K$ and $u_g(q_k)\in K^\perp$ for all $n\geq N_g$.
Then we can get the Hilbert-Schmidt norm of $[u_g,P]$ is bounded by the following inequality.

\begin{equation*}
\begin{aligned}
\left\lVert [u_g,P]\right\rVert_2^2&=\sum\limits_{k=1}^{\infty}\left\lVert [u_g,P]p_k\right\rVert+\sum\limits_{k=1}^{\infty}\left\lVert [u_g,P]q_k\right\rVert\\
&=\sum\limits_{k=1}^{\infty}\left\lVert (u_g-Pu_g)p_k\right\rVert+\sum\limits_{k=1}^{\infty}\left\lVert Pu_g q_k\right\rVert\\
&=\sum\limits_{k=1}^{N_g}(\left\lVert (u_g-Pu_g)p_k\right\rVert+\left\lVert Pu_g q_k\right\rVert)+\sum\limits_{k=N_g+1}^{\infty}(\left\lVert (u_g-Pu_g)p_k\right\rVert+\left\lVert Pu_g q_k\right\rVert)\\
&=\sum\limits_{k=1}^{N_g}(\left\lVert (u_g-Pu_g)p_k\right\rVert+\left\lVert Pu_g q_k\right\rVert)<\infty.
\end{aligned}
\end{equation*}
So $[u_g,P]$ is a Hilbert-Schmidt operator.
\end{proof}

Then, by the corollary 3.5, we can directly get:
\begin{corollary}
The Thompson group $F$ is implemented in $\mathcal{F}_P$ through its Koopman representation $\{u_g|g\in F\}\in \mathcal{U}(H)$.
\end{corollary}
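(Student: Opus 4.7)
The plan is to note that this corollary is essentially a direct consequence of the two lemmas immediately preceding it together with the implementation criterion established in Section 2. Specifically, Lemma 4.2 provides exactly the hypothesis required by the corollary on implementation of a single unitary: for each $g\in F$ the commutator $[u_g,P]$ lies in the Hilbert-Schmidt ideal, so the Koopman unitary $u_g$ falls inside $U_{res}(H)$. Thus no additional analytic work is needed here beyond invoking the theory of Section 2.

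Concretely, I would proceed as follows. First, fix an arbitrary $g\in F$. Apply Lemma 4.2 to obtain that $[u_g,P]$ is Hilbert-Schmidt, which places $u_g$ in $U_{res}(H)$. Then invoke the implementation criterion (the corollary stating that any $u\in U(H)$ with $[u,P]$ Hilbert-Schmidt is implemented in $\mathcal{F}$): this produces a unitary $U_g\in U(\mathcal{F}_P)$ such that
\[
\pi_P(a(u_g f)) \;=\; U_g\,\pi_P(a(f))\,U_g^{*} \quad \text{for all } f\in H.
\]
Since $g$ was arbitrary, every Koopman unitary is implemented, which is exactly the statement of the corollary. One may observe, as a side remark, that this yields the projective representation $\Gamma\circ u : F\to PU(\mathcal{F}_P)$ by composing the Koopman map $g\mapsto u_g$ with the projective representation $\Gamma$ of $U_{res}(H)$ constructed in Section 2; the lift $g\mapsto U_g$ itself is only defined up to the phase ambiguity built into the implementation criterion.

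There is no genuine obstacle in this final step: all of the substance sits in Lemma 4.1 (which uses the dyadic geometry of $F$ and the bound on $\mathrm{mil}(g)$ from Lemma 3.1 to show that $u_g$ preserves $K$ and $K^\perp$ past a certain level $N_g$) and Lemma 4.2 (which then reduces the Hilbert-Schmidt norm of $[u_g,P]$ to a finite sum). Once those are in hand, the corollary is a one-line application of the general criterion. The only point worth stressing when writing it out is the word \emph{projective}: because implementing unitaries are unique only up to a scalar in $S^1$, the collection $\{U_g\}_{g\in F}$ need not form a group homomorphism, which is precisely what motivates the lifting question pursued in the remainder of Section~4 in view of the computation $H^2(F;S^1)=S^1\times S^1$ from Corollary 3.3.
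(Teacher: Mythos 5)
Your proposal matches the paper's reasoning exactly: the paper derives the corollary in one line by citing the implementation criterion (that $[u,P]$ Hilbert--Schmidt implies $u$ is implemented) together with Lemma~4.2, which is precisely the chain you lay out. Your additional remarks on the phase ambiguity and the resulting projective representation are consistent with the discussion that immediately follows the corollary in the paper.
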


Up to now, for any $g\in F$, there is a unitary $U_g$ such that
\begin{center}
$\pi_P(a(u_gf))=U_g\pi_P(a(f))U_g^*$ for all $f\in H$.
\end{center}
As $\pi_P$ is irreducible, such a $U_g$ is unique up to a phase when it exists.
So, by passing to the projective unitary group $PU(\mathcal{F}_P)=U(\mathcal{F}_P)/(S^1\cdot I)$, we have a projective unitary representation
\begin{center}
$\Gamma:F\to PU(\mathcal{F}_P)$
\end{center}
as the composition of
\begin{center}
$\Gamma:F\stackrel{u}{\longrightarrow} \mathcal{U}(H)\stackrel{U}{\longrightarrow} U(\mathcal{F}_P)\stackrel{\cdot/(S^1\cdot I)}{\longrightarrow} PU(\mathcal{F}_P)$
\end{center}
by $g\mapsto u_g\mapsto U_g\mapsto \Gamma_g\in PU(\mathcal{F}_P)$.

Remark: The map $U$ is just one between sets, not a group homomorphism.

\subsection{Criternion for the lifting}
Now, we suppose there is a lifting $U:F\to U(\mathcal{F}_P)$ of $\Gamma:F\to PU(\mathcal{F}_P)$.
That is to say $\pi(U(g))=\Gamma(g)$ for all $g\in F$, where $\pi:U(\mathcal{F}_P)\to PU(\mathcal{F}_P)$ is the quotient map.

Since $F$ is finitely generated, it is enough to consider the lifting of $\Gamma_A,\Gamma_B$.
We can choose two arbitrary $R,S\in U(\mathcal{F}_P)$ such that $\pi(R)=\Gamma_A,\pi(S)=\Gamma_B$.
Then there must be two complex numbers $c_A,c_B\in S^1$ such that
\begin{center}
$U_A=c_A\cdot R$, $U_B=c_A\cdot S$.
\end{center}
And we will also have the lifting of $\Gamma_{A^*},\Gamma_{B^*}$ are $U_{A^{-1}}=c_A^{-1}\cdot R^{-1}$, $U_{B^{-1}}=c_B^{-1}\cdot S^{-1}$ respectively.
Then we have:
\begin{proposition}
With the lifting of $\Gamma_A,\Gamma_B$ (hence also $\Gamma_{A^{-1}},\Gamma_{B^{-1}}$) fixed as $U_A,U_B$,
there is a lifting $U:F\to U(\mathcal{F}_P)$ of $\Gamma:F\to PU(\mathcal{F}_P)$ if and only if the following conditions are satisfied:
\begin{enumerate}
\item $[U_AU_{B^{-1}},U_{A^{-1}}U_BU_A]=1$;
\item $[U_AU_{B^{-1}},U_{A^{-1}}^2U_BU_A^2]=1$.
\end{enumerate}

Moreover, whether it is satisfied is independent on the choice of $c_A,c_B$
\end{proposition}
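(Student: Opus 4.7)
My plan is to reduce the statement to the universal property of the presentation of $F$ and then verify that scalar ambiguities cancel in commutators.

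First, I would handle the necessity direction. If $U:F\to U(\mathcal{F}_P)$ is a group homomorphism with $U(A)=U_A$ and $U(B)=U_B$, then since the relations $[AB^{-1},A^{-1}BA]=1$ and $[AB^{-1},A^{-2}BA^2]=1$ hold in $F$, the same relations must hold in $U(\mathcal{F}_P)$ after substituting $A\mapsto U_A$ and $B\mapsto U_B$. Here I use that $U(A^{-1})=U_A^{-1}=U_{A^{-1}}$ and similarly for $B$, which follows from the specific choices $U_A=c_A R$, $U_{A^{-1}}=c_A^{-1}R^{-1}$ (the scalars cancel because they are central).

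Next, for sufficiency, I invoke von Dyck's theorem: since $F=\langle A,B\mid [AB^{-1},A^{-1}BA],[AB^{-1},A^{-2}BA^2]\rangle$, if the images $U_A,U_B\in U(\mathcal{F}_P)$ satisfy the two commutator relations, then there is a unique group homomorphism $U:F\to U(\mathcal{F}_P)$ extending $A\mapsto U_A$, $B\mapsto U_B$. It remains to check that this $U$ is a lifting of $\Gamma$, i.e.\ $\pi\circ U=\Gamma$ where $\pi:U(\mathcal{F}_P)\to PU(\mathcal{F}_P)$ is the quotient map. For any $g\in F$, write $g=w(A,B)$ for some word $w$; then by functoriality, $\pi(U(g))=\pi(w(U_A,U_B))=w(\pi(U_A),\pi(U_B))=w(\Gamma_A,\Gamma_B)=\Gamma(g)$, where the last equality uses that $\Gamma$ itself is a group homomorphism into $PU(\mathcal{F}_P)$. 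This automatically completes the lifting.

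Finally, for the independence assertion, suppose we replace $c_A,c_B$ by $c_A',c_B'$, giving new liftings $U_A'=\lambda U_A$ and $U_B'=\mu U_B$ with $\lambda=c_A'c_A^{-1}$, $\mu=c_B'c_B^{-1}$ scalars in $S^1$. Since scalars are central in $U(\mathcal{F}_P)$, each of the words $U_A U_{B^{-1}}$, $U_{A^{-1}}U_B U_A$, $U_{A^{-1}}^2 U_B U_A^2$ changes only by an overall scalar factor. The commutator $[X,Y]=XYX^{-1}Y^{-1}$ is unchanged when $X$ or $Y$ is multiplied by a scalar, so the two conditions in the proposition depend only on $\Gamma_A,\Gamma_B$ and not on the chosen representatives $U_A,U_B$.

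The argument is essentially formal and I do not expect a serious obstacle; the only point that requires a small amount of care is confirming $U_{A^{-1}}=U_A^{-1}$ (and similarly for $B$) under the convention $U_{A^{-1}}=c_A^{-1}R^{-1}$, which is immediate from the centrality of scalars, and then keeping track that the same centrality makes the two commutator conditions genuinely functions of $\Gamma_A,\Gamma_B$ alone.
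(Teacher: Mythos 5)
Your proposal is correct and follows essentially the same route as the paper: both reduce the lifting problem to checking that the two defining relations of $F$ hold in $U(\mathcal{F}_P)$ (the paper's ``obviously'' is your explicit appeal to von Dyck's theorem), and both establish independence of $c_A,c_B$ by noting that commutators absorb central scalar factors. You have simply filled in the details the paper leaves implicit.
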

\begin{proof}
Given $U_A,U_B$, $\Gamma$ can be lifted to a unitary $U$ is equivalent whether $U:F\to U(\mathcal{F}_P)$ is a homomorphism.
Obviously, this is equivalent to the two conditions above.

Moreover, as the two relations are both commutators.
The commutators are always $[RS^{-1},R^{-1}SR],[RS^{-1},R^{-2}SR^2]$, which are independent with the choice of $c_A,c_B$.
\end{proof}

\subsection{The infinitesimal generators of $F$}

We let $C=AB^{-1},D=A^{-1}BA^1$ and $E=A^{-2}BA^2$ so the relations of $F$ are
\begin{center}
$[C,D]=1,[C,E]=1$. 
\end{center}

\begin{lemma}
The Koopman representation of $F$ is in $U_{res}^0(H)$ with the projection $P$ given in section 3.1. 
\end{lemma}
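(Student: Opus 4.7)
Since Lemma~4.2 already places $u_g$ in $U_{res}(H)$, and the connected components of $U_{res}(H)$ are indexed by the Fredholm index $i(u) := \operatorname{index}(PuP|_K)$, the task reduces to showing $i(u_g) = 0$ for every $g\in F$. I would not separately reduce to the two generators $A,B$---the argument below works uniformly in $g$.

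Apply Lemma~4.1 to choose $N$ so that $u_gK_1\subset K$ and $u_gK_1'\subset K^\perp$, where $K_1 := \overline{\spn}\{p_{n,t}: n \ge N\}$ and $K_1' := \overline{\spn}\{q_{n,t}: n \ge N\}$. Set $K_2 := K\ominus K_1$, $K_2' := K^\perp\ominus K_1'$, $L_1 := u_gK_1\subset K$, $L_1' := u_gK_1'\subset K^\perp$, with finite dimensions $m,m',d,d'$ respectively. On the decomposition $K = K_1\oplus K_2$, the operator $A_{11} := Pu_gP|_K$ restricts on $K_1$ to the isometry $u_g|_{K_1}: K_1\to L_1$, and a direct kernel/cokernel computation yields $i(u_g) = m - d$. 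The analogous analysis on $K^\perp$ gives $\operatorname{index}((1-P)u_g(1-P)|_{K^\perp}) = m' - d'$. Since $u_g - [Pu_gP + (1-P)u_g(1-P)] = Pu_g(1-P) + (1-P)u_gP$ is Hilbert--Schmidt (hence compact) and $u_g$ is unitary (hence index $0$ on $H$), additivity of the Fredholm index gives
\[
(m-d)+(m'-d') = 0,\qquad\text{i.e., }m+m' = d+d'.
\]

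The remaining step is to conclude $m = d$, which I would do via two separate symmetries. First, $m = m'$ holds by construction: at each depth $n < N$ the sets of $p$- and $q$-labels coincide. Second, for $n\ge N$ the joint support of the Haar functions composing $p_{n,t}$ (and $q_{n,t}$) is the dyadic interval $I_{n,t} = [t/2^{n-2},(t+1)/2^{n-2}]$, on which $g\in F$ is linear with some slope $2^s$; a short Jacobian computation (the factor $2^{-s/2}$ from the Koopman representation exactly cancels the mismatch $2^{s/2}$ between the Haar normalizations $\sqrt{2}^{n-1}$ and $\sqrt{2}^{n-s-1}$) yields $u_g f_{n,k} = f_{n-s,k'}$ and hence $u_g p_{n,t} = p_{n-s,t'}$ and $u_g q_{n,t} = q_{n-s,t'}$, where $g(I_{n,t}) = I_{n-s,t'}$. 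Thus $u_g$ induces the \emph{same} partial bijection on deep $p$-labels as on deep $q$-labels, so the number of labels in the complement of the image agrees: $d = d'$. Combining $m = m'$, $d = d'$, and $m+m' = d+d'$ forces $m = d$, so $i(u_g) = 0$, as required.

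The step I expect to be the main obstacle is the Jacobian/cancellation calculation underpinning the identity $u_g p_{n,t} = p_{n-s,t'}$ (and its $q$-analogue): one must track how the Koopman factor $\sqrt{(g^{-1})'}$ interacts with the $\sqrt{2}^{n-1}$-normalizations of the Haar basis so that \emph{no} residual scalar survives. Once established, the finite-dimensional bookkeeping $m+m'=d+d'$, $m=m'$, $d=d'$ is routine.
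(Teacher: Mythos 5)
Your proof is correct but takes a genuinely different route from the paper's. The paper's proof is two lines: it reduces to the two generators, asserts that $i(u_A)=i(u_B)=0$ ``which is proved in Appendix'' (where the matrices of $u_A,u_B$ are written out and the off-diagonal blocks observed to be finite), and then invokes the homomorphism property of the index $i(gh)=i(g)+i(h)$ (the paper writes this multiplicatively, a typo). You instead argue uniformly in $g$, never touching the explicit matrices: splitting $K=K_1\oplus K_2$ and $K^\perp=K_1'\oplus K_2'$ into deep/shallow parts via Lemma~4.1, you get $i(u_g)=m-d$ by a finite-rank perturbation of $Pu_gP$ (your phrase ``a direct kernel/cokernel computation'' slightly understates this, since $A_{11}|_{K_2}$ need not vanish and must be zeroed out, but the perturbation is finite-rank so the index is unaffected), use that $u_g$ is unitary with compact off-diagonal blocks to get $m+m'=d+d'$, and then exploit the exact $p$/$q$-label symmetry ($m=m'$ is trivial; $d=d'$ because the Jacobian factor $2^{-s/2}$ cancels the mismatch in Haar normalizations so that $u_g$ sends deep $p_{n,t}$ to $p_{n-s,t'}$ and deep $q_{n,t}$ to $q_{n-s,t'}$ via the \emph{same} label map). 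This exposes the structural reason the index vanishes --- the projection $P$ was built so as to treat the $p$- and $q$-sectors identically --- and is arguably cleaner than the paper's route, which leans on computations the Appendix only partially spells out. One small slip to fix: you write ``$L_1:=u_gK_1\subset K$, $L_1':=u_gK_1'\subset K^\perp$, with finite dimensions $m,m',d,d'$ respectively,'' but $L_1,L_1'$ are infinite-dimensional; from context $d,d'$ should be defined as $\dim(K\ominus L_1)$ and $\dim(K^\perp\ominus L_1')$, and their finiteness is what Fredholmness of $Pu_gP$ (from Lemma~4.2) buys you.
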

\begin{proof}
It suffices to show that $i(u_A)=i(u_B)=0$ which is proved in Appendix. Then it follows by $i(gh)=i(g)i(h)$ for $g,h\in U_{res}(H)$.  
\end{proof}

\begin{theorem}
There exist infinitesimal generators $X_C,X_D,X_E \in \mathcal{A}$ with 
$u_C=e^{iX_C},u_D=e^{iX_D},u_E=e^{iX_E}$ and $e^{itX_C},e^{itX_D},e^{itX_E}\in U_{res}^0(H)$ for $0\leq t\leq 1$ such that
\begin{center}
$[X_C,X_D]=[X_C,X_E]=0$. 
\end{center}
\end{theorem}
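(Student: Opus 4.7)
The strategy is to realise $X_C, X_D, X_E$ as principal-branch logarithms of the unitaries $u_C, u_D, u_E$, using Theorem~2.1 to secure membership in $\mathcal{A}$ and the one-parameter-subgroup property, and then using the joint spectral theorem for commuting normal operators to secure the commutation relations.

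By the preceding lemma, $u_C, u_D, u_E \in U_{res}^0(H)$, so each has index $0$; and the relations $[C,D]=[C,E]=1$ in $F$ transfer, via the Koopman representation, to $[u_C,u_D]=[u_C,u_E]=1$ in $U(H)$. For each $\ast\in\{C,D,E\}$, Theorem~2.1 applied to the pair $(1,u_\ast)$ furnishes a self-adjoint $X_\ast\in\mathcal{A}$ with $u_\ast=e^{iX_\ast}$ such that the path $t\mapsto e^{itX_\ast}$, $t\in[0,1]$, lies in $U_{res}(H)$; starting at the identity, this path stays inside $U_{res}^0(H)$, giving the second half of the conclusion for free.

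To pin down commutation, I would identify each $X_\ast$ with the principal-branch logarithm $\phi(u_\ast)$, where $\phi:\mathbb{T}\to(-\pi,\pi]$ is the principal argument interpreted via Borel functional calculus; this identification is legitimate because the uniqueness clause of Theorem~2.1 forces $X_\ast=\phi(u_\ast)$ whenever $\sigma(X_\ast)\subset(-\pi,\pi)$, i.e., whenever $-1\notin\sigma(u_\ast)$. With the identification in hand, $[X_C,X_D]=0$ and $[X_C,X_E]=0$ follow from the spectral theorem for commuting unitaries: each pair $(u_C,u_D)$ and $(u_C,u_E)$ carries a joint spectral measure on $\mathbb{T}^2$, and two bounded Borel functions of a commuting pair of normal operators commute. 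The same operator $X_C$ appears in both commutators because it depends only on $u_C$.

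The main obstacle is the boundary case $-1\in\sigma(u_\ast)$, where $\phi(u_\ast)$ has $\pi$ in its spectrum and the uniqueness of Theorem~2.1 no longer directly applies. To handle this I would orthogonally decompose $H=\ker(u_\ast+1)\oplus\ker(u_\ast+1)^\perp$---both summands are invariant under $u_\ast$ and, by commutativity, under the other unitaries as well---and set $X_\ast=\pi I$ on the eigenspace and $\phi(u_\ast)$ on its complement. The resulting $X_\ast$ differs from the one supplied by Theorem~2.1 by $2\pi$ times the spectral projection of $u_\ast$ at $-1$, which lies in $\{u_\ast\}''$; verifying that this correction keeps $X_\ast$ inside $\mathcal{A}$---i.e., that $PX_\ast(1-P)$ remains Hilbert--Schmidt---is the one calculation left to the full proof, and it should follow by writing the spectral projection as a strong limit of bounded Borel functions of $u_\ast$ together with the Hilbert--Schmidtness of $[u_\ast,P]$.
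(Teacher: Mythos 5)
Your approach is genuinely different from the paper's, and it has a real gap. The paper never touches the spectrum of $u_C,u_D,u_E$: it exploits the geometric fact that $C=AB^{-1}$, $D=A^{-1}BA$, and $E=A^{-2}BA^2$ have disjoint supports as homeomorphisms of $[0,1]$ ($C$ moves only $(0,3/4)$, $D$ only $(3/4,1)$, $E$ only $(7/8,1)$). Hence $u_C$ is the identity on $H_D=L^2[3/4,1]$ and $u_D$ is the identity on $H_C=L^2[0,3/4]$; applying the Carey--Hurst--O'Brien result to the restrictions on $H_C$ and $H_D$ separately and extending by zero on the complementary block gives $X_C$ and $X_D$ with literally orthogonal supports, so $[X_C,X_D]=0$ is immediate. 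The same works for $X_C,X_E$, and the same $X_C$ serves in both relations.

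Your functional-calculus route needs to identify the CHB element $X_\ast$ with $\phi(u_\ast)$, and the uniqueness clause of Theorem~2.1 only does this when $\sigma(X_\ast)\subset(-\pi,\pi)$, equivalently $-1\notin\sigma(u_\ast)$. That is exactly the hypothesis you cannot verify here: $u_C$ restricted to its moving part is a weighted-shift-type operator (compare the explicit matrices of $u_A,u_B$ in the Appendix), and such operators generically have $-1$ in the \emph{essential} spectrum with $\ker(u_\ast+1)=0$. In that regime your proposed fix --- peeling off the eigenspace at $-1$ and adding $2\pi$ times its spectral projection --- does nothing, because the projection is zero while $\phi(u_\ast)$ still has spectrum accumulating at $\pm\pi$; CHB uniqueness never kicks in and $\phi(u_\ast)\in\mathcal{A}$ would have to be established independently, which is the hard part. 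You acknowledge this as the ``one calculation left,'' but it is not a boundary case to patch: it is the expected situation, and without it you have not produced commuting generators in $\mathcal{A}$. The missing idea is to replace spectral analysis of $u_\ast$ by the disjointness of the supports of $C,D,E$, which makes the commutation structural rather than spectral.
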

\begin{proof}
The claim $e^{itX_C},e^{itX_D},e^{itX_E}\in U_{res}^0(H)$ is true by Theorem 1.1. 

Now we let $H_C=L^2[0,\frac{3}{4}]$ and $H_D=L^2[\frac{3}{4},1]$. Then $H=H_C\oplus H_D$. By the following graphs presentation of $u_C,u_D\in U(H)$ actiong on $[0,1]$, there is
\begin{center}
$u_C\in B(H_C)\oplus \id_{H_D}$ and $u_D\in \id_{H_C}\oplus B(H_D)$.
\end{center}

\begin{figure}[H]
  \centering
  \includegraphics[width=10cm]{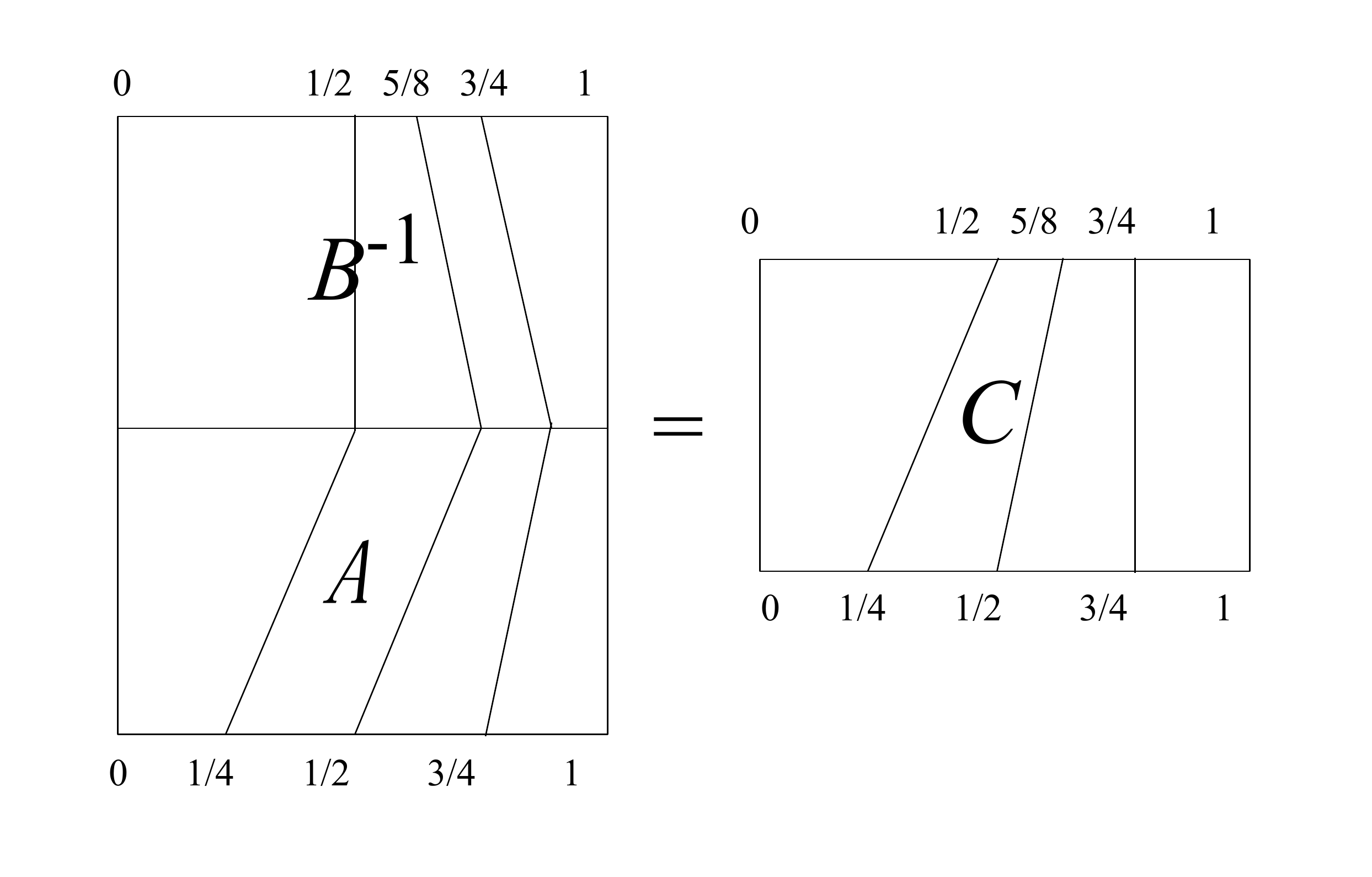}\\
  \caption{$C$ acts on $[0,1]$}\label{}
\end{figure}
\begin{figure}[H]
  \centering
  \includegraphics[width=10cm]{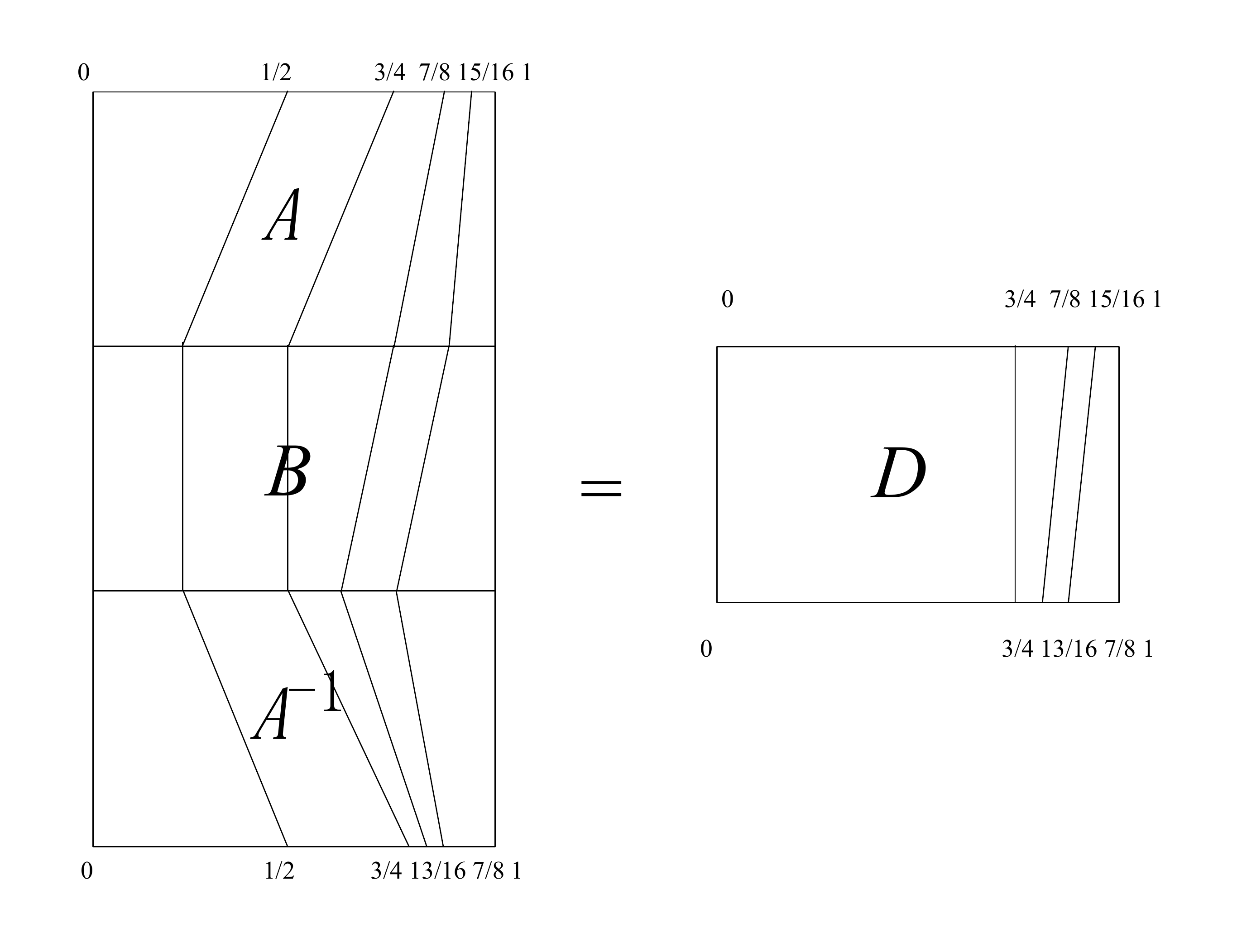}\\
  \caption{$D$ acts on $[0,1]$}\label{}
\end{figure}

Now we apply Theorem 1.1 to $H_C,H_D$ and obtain that there is 
\begin{center}
$X_C\in (B(H_C)\oplus \id_{H_D})\cap \mathcal{A}$ and $X_D\in (\id_{H_C}\oplus B(H_D))\cap\mathcal{A}$
\end{center}
which are the infinitesimal generators of the one parameter subgroups corresponding to $u_C,u_D$.
Hence $X_C,X_D$ commute. 

The proof of $[X_C,X_E]=0$ follows similarly. 
\end{proof}

\begin{corollary}
The lifting of the commutators $[C,D]$ and $[C,E]$ into $U(\mathcal{F})$ are given by
\begin{center}
$[U_C,U_D]=e^{-2i\operatorname{Im}\tr(PX_C(1-P)X_D)}$,\\
$[U_C,U_E]=e^{-2i\operatorname{Im}\tr(PX_C(1-P)X_E)}$.    
\end{center}

\end{corollary}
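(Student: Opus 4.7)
The statement is a direct consequence of Corollary 2.11 applied to the generators exhibited in Theorem 4.6, so the plan is essentially to verify that all hypotheses of that corollary are met and then quote it.

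First I would check that the setup of Corollary 2.11 applies to the pairs $(u_C,u_D)$ and $(u_C,u_E)$. By Lemma 4.5 the Koopman representatives $u_C,u_D,u_E$ all lie in $U_{res}^0(H)$, which is exactly the one-parameter-subgroup hypothesis needed to invoke Theorem 2.10. By Theorem 4.6 we have self-adjoint elements $X_C,X_D,X_E\in\mathcal{A}$ with $u_C=e^{iX_C}$, $u_D=e^{iX_D}$, $u_E=e^{iX_E}$, together with the commutation relations $[X_C,X_D]=[X_C,X_E]=0$. This places the pairs $(X_C,X_D)$ and $(X_C,X_E)$ squarely in the situation of Corollary 2.11.

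Next, I would simply substitute these data into the formula of Corollary 2.11. Applied to $(u_C,u_D)$ with infinitesimal generators $(X_C,X_D)$ it yields
\begin{equation*}
[U_C,U_D]=e^{-2i\operatorname{Im}\tr(PX_C(1-P)X_D)},
\end{equation*}
and applied to $(u_C,u_E)$ with generators $(X_C,X_E)$ it yields the analogous identity with $X_D$ replaced by $X_E$. In each case, $U_C$, $U_D$, $U_E$ denote the implementers $\Gamma(u_C)$, $\Gamma(u_D)$, $\Gamma(u_E)$ obtained from the one-parameter-group construction $W(\,\cdot\,)$ of Theorem 2.10, which is the lifting $\Gamma$ alluded to just before Corollary 2.11.

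There is essentially no technical obstacle here, since all the nontrivial work has been carried out in Theorem 4.6 (producing commuting infinitesimal generators using the block decomposition $H=H_C\oplus H_D$) and in Corollary 2.11 (the commutator formula for one-parameter subgroups with commuting generators). The only small point to be mindful of is that the lifts $U_C,U_D,U_E$ appearing in the statement must be chosen as the particular implementers $W(X_C),W(X_D),W(X_E)$ coming from Theorem 2.10, rather than arbitrary lifts in $U(\mathcal{F})$; with this understood, the formulas are immediate.
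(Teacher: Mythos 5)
Your proof is correct and is precisely the intended derivation: the paper states this corollary without an explicit argument exactly because it is an immediate application of the commutator formula for one-parameter subgroups (the corollary following Lundberg's theorem in Section 2.4) to the commuting generators $X_C,X_D,X_E$ produced in Theorem 4.6, with Lemma 4.5 supplying the $U_{res}^0(H)$ hypothesis. One small remark: your closing caveat about needing to take the specific lifts $W(X_C),W(X_D),W(X_E)$ is unnecessary. Since $\pi_P$ is irreducible, any two implementers of the same $u_g$ differ only by a phase, and phases cancel in a commutator; so $[U_C,U_D]$ and $[U_C,U_E]$ are the same for every choice of lifts, which is what makes the commutator a well-defined element of $S^1$ in the first place (the same lift-independence observation appears in the paper's Proposition 4.4).
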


One can further obtain
\begin{center}
$[U_C,U_D]=e^{-2i\operatorname{Im}\tr(PX_C(1-P)X_D)}=e^{-2i\operatorname{Im}\tr(X_C^{(1,2)}X_D^{(2,1)})}$, \\
$[U_C,U_E]=e^{-2i\operatorname{Im}\tr(PX_C(1-P)X_E)}=e^{-2i\operatorname{Im}\tr(X_C^{(1,2)}X_E^{(2,1)})}$.
\end{center}
Note that as $X_C,X_D,X_E\in \mathcal{A}$, $X_C^{(1,2)},X_D^{(1,2)},X_E^{(1,2)}$ are Hilbert-Schmidt. 

\begin{corollary}
If $X_C^{(1,2)}X_D^{(2,1)},X_D^{(1,2)}X_E^{(2,1)}$ are real, we have $[U_C,U_D]=[U_C,U_E]=1$ and the projective representation $\Gamma$ can be lifted to an ordinary one. 
\end{corollary}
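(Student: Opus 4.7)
The plan is to apply Corollary 4.6 and then Proposition 4.5 in sequence; the substance of the proof is just recognizing that the stated hypothesis kills the imaginary part in the explicit formula of Corollary 4.6. Since $X_C,X_D,X_E\in\mathcal{A}$, the off-diagonal blocks $X_C^{(1,2)},X_D^{(1,2)},X_E^{(1,2)}$ and their adjoints $X_D^{(2,1)},X_E^{(2,1)}$ are Hilbert--Schmidt, so the products $X_C^{(1,2)}X_D^{(2,1)}$ and $X_C^{(1,2)}X_E^{(2,1)}$ are trace class and their traces are well-defined complex numbers. I read ``real'' in the hypothesis as ``having real trace''; under any stronger interpretation the same conclusion follows \emph{a fortiori}.

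With this reading, $\operatorname{Im}\tr(X_C^{(1,2)}X_D^{(2,1)})=\operatorname{Im}\tr(X_C^{(1,2)}X_E^{(2,1)})=0$, and Corollary 4.6 immediately yields $[U_C,U_D]=[U_C,U_E]=e^{0}=1$. To invoke Proposition 4.5 I need to rewrite these as commutators of the specific products $U_AU_{B^{-1}}$, $U_{A^{-1}}U_BU_A$, $U_{A^{-1}}^2U_BU_A^2$ appearing there. Each of these three products is a unitary implementing $u_C$, $u_D$, $u_E$ respectively, and by irreducibility of $\pi_P$ (Lemma 2.3) any two implementations of the same element of $U_{res}(H)$ differ only by a scalar in $S^1$. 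Since group commutators are insensitive to $S^1$-scalar factors, one has
$$[U_AU_{B^{-1}},\,U_{A^{-1}}U_BU_A]=[U_C,U_D]=1,\qquad [U_AU_{B^{-1}},\,U_{A^{-1}}^2U_BU_A^2]=[U_C,U_E]=1.$$
Both hypotheses of Proposition 4.5 are therefore satisfied, producing an ordinary lifting $U:F\to U(\mathcal{F}_P)$ of $\Gamma$.

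The argument is essentially routine once Corollary 4.6 and Proposition 4.5 are in hand; the only mild wrinkle is the passage between $U_C$ and $U_AU_{B^{-1}}$ (and analogues for $D,E$), which is handled by the scalar-invariance of commutators. The genuine work — and the real obstacle, not addressed by this corollary — lies one level deeper: to apply the corollary to the Koopman representation of $F$ one must compute the matrix entries of the infinitesimal generators $X_C,X_D,X_E$ in the basis of Section 4.1 and verify that the resulting traces are indeed real. That computation is the substantive question toward which this corollary is a clean reduction.
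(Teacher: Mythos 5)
Your proof is correct and follows the argument the paper leaves implicit (the corollary is stated without a written proof): the hypothesis kills the imaginary parts of the traces in the preceding commutator formulas, giving $[U_C,U_D]=[U_C,U_E]=1$, and this transfers to the lifting criterion of Proposition~4.4 because $U_C,U_D,U_E$ and $U_AU_{B^{-1}},U_{A^{-1}}U_BU_A,U_{A^{-1}}^2U_BU_A^2$ implement the same unitaries, hence differ by $S^1$ scalars, and group commutators are insensitive to such scalars. You also correctly read past a typo in the hypothesis---the second product should be $X_C^{(1,2)}X_E^{(2,1)}$, not $X_D^{(1,2)}X_E^{(2,1)}$, as the formula $[U_C,U_E]=e^{-2i\operatorname{Im}\tr(X_C^{(1,2)}X_E^{(2,1)})}$ preceding the corollary makes clear.
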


We can show $u_C,u_D,u_E\in B(H)$ are real in the Koopman representation (Appendix).  
Assume we can apply the following logarithm (where we have to prove the convergence at first): 
\begin{center}
$\log u=-\sum\limits_{k=1}^{\infty}\frac{1}{k}(1-u)^k$. 
\end{center}
We get $\log u_C=iX_C,\log u_D=iX_D,\log u_E=iX_E$ so that $iX_C,iX_D,iX_E$ are real and hence $X_C^{(1,2)}X_D^{(2,1)},X_C^{(1,2)}X_E^{(2,1)}$ are real. 
Then the lifting are straightforward. 

\subsection{Projections in $\text{SO}(2)$ and $\text{U}(2)$}

In Section 3.1, we define a new orthonormal basis $\{p_{n,t},q_{n,t}\}_{n\in \mathbb{N},0\leq t\leq 2^{n-2}-1}$ given by
\begin{equation*}
 \left(
  \begin{array}{c}
          p_{n,t} \\
          q_{n,t}
 \end{array}
 \right)
 =M\cdot
  \left(
  \begin{array}{c}
          f_{n,2t} \\
          f_{n,2t+1}
 \end{array}
 \right)
\end{equation*}
where
\begin{equation*}
 M=\left(
  \begin{array}{cc}
  \frac{\sqrt{2}}{2}&\frac{\sqrt{2}}{2} \\
          \frac{\sqrt{2}}{2}&-\frac{\sqrt{2}}{2}
 \end{array}
 \right)
\end{equation*}

We may let $M$ to be any matrix in $\text{SO}(2)$ or $\text{U}(2)$.
Now, assume $M\in \text{U}(2)$ and there is another orthonormal basis $B_M=\{p_1^M,q_1^M\}\cup\{p^M_{n,t},q^M_{n,t}\}_{n\geq 2,0\leq t\leq 2^{n-2}-1}$ given by
\begin{equation*}
 \left(
  \begin{array}{c}
          p_{n,t} \\
          q_{n,t}
 \end{array}
 \right)
 =M\cdot
  \left(
  \begin{array}{c}
          f_{n,2t} \\
          f_{n,2t+1}
 \end{array}
 \right)
\end{equation*}
with the $p^M_1=p_1,q^M_1=q_1$ defined in Section 3.1. 

Now, let $P_M\subset \mathcal{B}(H)$ to be the projection onto the subspace $K_M=\spn\{p_1^M,\{p^M_{n,t}\}_{n\geq 2,0\leq t\leq 2^{n-2}-1}\}$.
And there is also a Fermionic Fock space
\begin{center}
$\mathcal{F}_M=\mathcal{F}_{P_M}=\wedge(P_MH)\widehat{\otimes}\wedge(P_M^{\bot} H)^{*}$.
\end{center}
Every result in Section 4.1 follows similarly.
So that we get a projective unitary representation
\begin{center}
$\rho_M:F\to PU(\mathcal{F}_M)$.
\end{center}

According to Proposition 4.4, once the lifting of $\rho_M(A),\rho_M(B)$ are fixed as $U_M(A),U_M(B)\in U(\mathcal{F}_M)$,
it gives a value $\Psi(M)=(\alpha_M,\beta_M)\in S^1\times S^1$ by
\begin{enumerate}
\item $\alpha_M=[U_{M}(A)U_{M}(B)^{-1},U_{M}(A)^{-1}U_{M}(B)U_{M}(A)]$;
\item $\beta_M=[U_{M}(A)U_{M}(B)^{-1},U_{M}(A)^{-2}U_{M}(B)U_{M}(A)^2]$.
\end{enumerate}

Hence we obtain a well-defined map which indicates the lifting problem.  
\begin{proposition}
For any $M\in \text{U}(2)$, we get a map
\begin{center}
$\Psi:\text{U}(2) \rightarrow S^1\times S^1$ by $\Psi(M)=(\alpha_M,\beta_M)$
\end{center}
and $\rho_M$ can be lifted iff $\Psi(M)=(1,1)$. 
Moreover, when $M\in \text{SO}(2)$, we always have
\begin{center}
$\Psi:\text{SO}(2) \rightarrow \mathbb{R}^2\cap (S^1\times S^1)=\{-1,1\}\times \{-1,1\}$.
\end{center}
\end{proposition}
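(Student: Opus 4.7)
The first two assertions (well-definedness of $\Psi$ and the liftability criterion) are essentially repackagings of Proposition 4.4, but one extra point needs verification: that $\alpha_M$ and $\beta_M$ are scalars, not just arbitrary unitaries. I would note that since $[C,D]=[C,E]=1$ hold in $F$, their images under $\rho_M$ commute in $PU(\mathcal{F}_M)$; hence the commutators of any two lifts lie in the kernel $S^1\cdot I$ of $U(\mathcal{F}_M)\to PU(\mathcal{F}_M)$. That these scalars are independent of the choices of $U_M(A), U_M(B)$ is precisely the content of the ``moreover'' clause of Proposition 4.4. The liftability equivalence is then immediate, since $F$ is generated by $A,B$ with defining relations $[C,D]=[C,E]=\id$.

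The substantive content is the $\mathrm{SO}(2)$ claim. The key observation is that when $M\in \mathrm{SO}(2)$, the matrix $M$ has real entries, and the Haar-type basis $\{f_{n,k}\}$ consists of real-valued functions; hence the basis $B_M$ consists of real functions, and the projection $P_M$ commutes with complex conjugation on $H=L^2([0,1])$. Moreover, every element of $F$ acts on $[0,1]$ by a real piecewise linear homeomorphism, so each Koopman operator $u_g$ preserves real-valued functions, i.e.\ commutes with conjugation on $H$.

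I would then promote this real structure to an antiunitary involution $J$ on the Fock space $\mathcal{F}_M = \wedge(P_MH)\widehat{\otimes}\wedge(P_M^{\perp}H)^{*}$, by declaring $J$ to fix the orthonormal basis of Lemma 1.3 (which, being built from the real basis $B_M$, is a genuine real basis) and to act antilinearly on coefficients. A direct verification on generators yields the intertwining identity
\begin{equation*}
J\,\pi_{P_M}(a(f))\,J \;=\; \pi_{P_M}(a(\bar f)), \qquad f\in H.
\end{equation*}
Combined with the reality $\overline{u_g f}=u_g\bar f$, this shows that $JU_M(g)J$ is also an implementer of $u_g$, so by irreducibility of $\pi_{P_M}$ one has $JU_M(g)J=\lambda_g U_M(g)$ for some $\lambda_g\in S^1$. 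Choosing square roots $c_A^2=\lambda_A$, $c_B^2=\lambda_B$ and replacing $U_M(A), U_M(B)$ by $c_AU_M(A), c_BU_M(B)$, I may assume $JU_M(A)J=U_M(A)$ and $JU_M(B)J=U_M(B)$.

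With this normalization, the words $U_M(C)=U_M(A)U_M(B)^{-1}$, $U_M(D)=U_M(A)^{-1}U_M(B)U_M(A)$, $U_M(E)=U_M(A)^{-2}U_M(B)U_M(A)^2$ all commute with $J$, so the scalars $\alpha_M, \beta_M$ satisfy $J\alpha_M J=\alpha_M$. For a scalar this forces $\bar\alpha_M=\alpha_M$, hence $\alpha_M\in\{-1,1\}$, and identically for $\beta_M$. By the independence of the commutators from the choice of phases (Proposition 4.4), the same values are obtained from any choice of lifts, yielding the conclusion for the original $\Psi$. The main obstacle I anticipate is setting up $J$ rigorously on the dualized factor $\wedge(P_M^{\perp}H)^{*}$ and checking the intertwining identity with the correct conventions, since the creation/annihilation operators play swapped roles on the two tensor factors in the definition of $\pi_{P_M}$.
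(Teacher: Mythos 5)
The paper states Proposition 4.9 without a proof, so there is no ``paper's own proof'' to compare against directly; what the paper does offer (at the end of Section 4.3) is a \emph{sketch} of a route via the infinitesimal generators $X_C,X_D,X_E$, the trace formula of Corollary 2.8, and the observed reality of the Koopman matrices --- a route that the author explicitly flags as conditional on proving convergence of the logarithm series $\log u = -\sum_k (1-u)^k/k$, which is never established. Your argument via an antiunitary involution $J$ is a genuinely different and, in my view, more robust route. You bypass the logarithm/one-parameter-subgroup machinery entirely: the fact that $M\in\mathrm{SO}(2)$ makes the basis $B_M$ real, hence $P_M$ and all $u_g$ commute with pointwise conjugation, which promotes to a conjugation $J$ on $\mathcal{F}_M$; after rescaling the lifts of $\Gamma_A,\Gamma_B$ by square roots so they commute with $J$, the scalar commutators $\alpha_M,\beta_M$ are forced to be self-conjugate and hence $\pm 1$. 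This delivers exactly the $\{-1,1\}^2$ bound of the proposition, whereas the paper's conditional sketch in Section 4.3, if completed, would give the stronger value $(1,1)$ --- so your method trades sharpness for unconditional validity, which is appropriate since the proposition only claims $\{-1,1\}^2$. The computation $\bar c_A^{\,2}\lambda_A = 1 \Rightarrow c_A^2 = \lambda_A$ and the invocation of the ``moreover'' clause of Proposition 4.4 to transfer the conclusion back to arbitrary lifts are both correct. Your first paragraph correctly fills in the one unstated point (that $\alpha_M,\beta_M$ are genuinely in $S^1\cdot I$, by $[C,D]=[C,E]=\mathrm{id}$ and irreducibility of $\pi_{P_M}$).

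One point to tighten if you write this out in full: when you verify $J\,\pi_{P_M}(a(f))\,J = \pi_{P_M}(a(\bar f))$, it is cleanest to check it on $f$ ranging over the real basis $\{p_i\}\cup\{q_j\}$, where $\pi_{P_M}(a(p_i)) = a(p_i)\otimes 1$ and $\pi_{P_M}(a(q_j)) = 1\otimes a(q_j^{*})^{*}$ each send basis vectors of Lemma 1.3 to $\pm$(basis vectors) with real signs, so they manifestly commute with $J$; then extend by (anti)linearity. This addresses the dual-factor convention issue you flagged: the only inner products that appear are $\langle q_i, q_j\rangle\in\{0,1\}$, which are real, so no conjugation mismatch arises on the $\wedge(P_M^{\perp}H)^{*}$ leg.
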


\appendix
\section*{Appendices}
\addcontentsline{toc}{section}{Appendices}
\renewcommand{\thesubsection}{\Alph{subsection}}

\subsection{The matrices $u_A,u_B$}

The actions of $u_A,u_B$ on lower terms ($p_{n,t},q_{n,t}$ with $n\leq 3,4$ respectively) are given by
\begin{center}
$u_A=\begin{pmatrix}
\ddots &  \vdots & \vdots & \vdots & \vdots & \vdots & \vdots & \vdots & \vdots &\udots \\
\ldots &  0 & 0 & 0 & 0 & 0 & 0 & 0& 0 &\ldots\\
\ldots  & \frac{1}{2} & \frac{1}{2} & 0 & 0 & -\frac{1}{2} & -\frac{1}{2}  & 0&0&\ldots\\
\ldots &  0 & 0 & 0 & 0 & 0 & 0 & 0&0&\ldots\\
\ldots & 0 &\frac{1}{2} & -\frac{1}{4} &  \frac{\sqrt{2}}{4} & -\frac{1}{2} & -\frac{1}{4} & \frac{1}{2} & 0&\ldots\\
\ldots &  0 &0 &\frac{1}{2} & \frac{\sqrt{2}}{2} & 0 & \frac{1}{2} & 0 &  0&\ldots\\
\ldots  & 0 &0 & -\frac{\sqrt{2}}{4} & \frac{1}{2} & \frac{\sqrt{2}}{2} & -\frac{\sqrt{2}}{4} & 0 & 0&\ldots\\
\ldots & 0 &\frac{1}{2} & \frac{1}{4} & -\frac{\sqrt{2}}{4} & \frac{1}{2} & \frac{1}{4} & \frac{1}{2} & 0&\ldots\\
\ldots  & 0 & 0 & 0 & 0 & 0 & 0 & 0 &  1&\ldots\\
\dots  & 0 &\frac{1}{2} & -\frac{1}{2} & 0 & 0 & \frac{1}{2} & -\frac{1}{2}  &0&\ldots\\
\dots  & 0 & 0 & 0 & 0 & 0 & 0 & 0& 0&\ldots\\
\udots  & \vdots & \vdots & \vdots & \vdots & \vdots & \vdots & \vdots & \vdots &\ddots 
\end{pmatrix}$
\end{center}

\begin{center}
$u_B=\begin{pmatrix}
\ddots&\vdots&\vdots&\vdots&\vdots&\vdots&\vdots&\vdots&\vdots&\vdots&\vdots&\vdots&\vdots&\vdots&\vdots&\vdots&\vdots&\udots\\
\ldots&0&\frac{1}{2}&0&0&\frac{1}{2}&0&0&0& 0&0&0&-\frac{1}{2}&0&0&-\frac{1}{2}&0&\ldots\\
\ldots&0&0&0&0&0&0&0&0&0&0&0&0&0&0&0&0& \ldots\\
\ldots&0&0&1&0&0&0&0&0&0&0&0&0&0&0&0&0& \ldots\\
\ldots&0&0&0&1&0&0&0&0&0&0&0&0&0&0&0&0& \ldots\\
\ldots&0&\frac{1}{2}&0&0&-\frac{1}{4}&0&\frac{2+\sqrt{2}}{8}&0&\frac{1-\sqrt{2}}{4}&-\frac{2+\sqrt{2}}{8}&0&-\frac{1}{4}&0&0&\frac{1}{2}&0& \ldots\\
\ldots&0&0&0&0&0&1&0&0&0&0&0&0&0&0&0&0& \ldots\\
\ldots&0&0&0&0&\frac{2+\sqrt{2}}{8}&0&\frac{3+\sqrt{2}}{8}&0&-\frac{\sqrt{2}}{8}&\frac{5-\sqrt{2}}{8}&0&\frac{2+\sqrt{2}}{8}&0&0&0&0& \ldots\\
\ldots&0&0&0&0&0&0&0&1&0&0&0&0&0&0&0&0& \ldots\\
\ldots&0&0&0&0&\frac{-1+\sqrt{2}}{4}&0&\frac{\sqrt{2}}{8}&0&\frac{1+2\sqrt{2}}{4}&\frac{-\sqrt{2}}{8}&0&\frac{-1+\sqrt{2}}{4}&0&0&0&0& \ldots\\
\ldots&0&0&0&0&-\frac{2+\sqrt{2}}{8}&0&\frac{5-2\sqrt{2}}{8}&0&\frac{\sqrt{2}}{8}&\frac{3+2\sqrt{2}}{8}&0&-\frac{2+\sqrt{2}}{8}&0&0&0&0& \ldots\\
\ldots&0&0&0&0&0&0&0&0&0&0&1&0&0&0&0&0& \ldots\\
\ldots&0&\frac{1}{2}&0&0&\frac{1}{4}&0&-\frac{2+\sqrt{2}}{8}&0&\frac{-1+\sqrt{2}}{4}&\frac{2+\sqrt{2}}{8}&0&\frac{1}{4}&0&0&\frac{1}{2}&0& \ldots\\
\ldots&0&0&0&0&0&0&0&0&0&0&0&0&1&0&0&0& \ldots\\
\ldots&0&0&0&0&0&0&0&0&0&0&1&0&0&1&0&0& \ldots\\
\ldots&0&0&0&0&0&0&0&0&0&0&0&0&0&0&0&0& \ldots\\
\ldots&0&\frac{1}{2}&0&0&-\frac{1}{2}&0&0&0& 0&0&0&\frac{1}{2}&0&0&-\frac{1}{2}&0&\ldots\\
\udots&\vdots&\vdots&\vdots&\vdots&\vdots&\vdots&\vdots&\vdots&\vdots&\vdots&\vdots&\vdots&\vdots&\vdots&\vdots&\vdots&\ddots
\end{pmatrix}$
\end{center}

The actions of $u_A,u_B$ on other terms($p_{n,t},q_{n,t}$ with $n\geq 4,5$ respectively) are determined as follows.

\begin{alignat*}{2}
u_A(p_{n,t})=
\left\{
             \begin{array}{ll}
             p_{n+1,t}, & \text{if~} \frac{t}{2^{n-2}}\in [0,\frac{1}{2}] \\
            p_{n,t-2^{n-4}}, & \text{if~} \frac{t}{2^{n-2}}\in (\frac{1}{2},\frac{3}{4}] \\p_{n-1,t-2^{n-3}}, & \text{if~} \frac{t}{2^{n-2}}\in (\frac{3}{4},1] \\
             \end{array}
\right.
\qquad 
u_A(q_{n,t})=
\left\{
             \begin{array}{ll}
             q_{n+1,t}, & \text{if~} \frac{t}{2^{n-2}}\in [0,\frac{1}{2}] \\
            q_{n,t-2^{n-4}}, & \text{if~} \frac{t}{2^{n-2}}\in (\frac{1}{2},\frac{3}{4}] \\q_{n-1,t-2^{n-3}}, & \text{if~} \frac{t}{2^{n-2}}\in (\frac{3}{4},1] \\
             \end{array}
\right.
\end{alignat*}
\begin{alignat*}{2}
u_B(p_{n,t})=
\left\{
             \begin{array}{ll}
             p_{n,t}, & \text{if~} \frac{t}{2^{n-2}}\in [0,\frac{1}{2}] \\
             p_{n+1,t+2^{n-3}}, & \text{if~} \frac{t}{2^{n-2}}\in (\frac{1}{2},\frac{3}{4}] \\
            p_{n,t-2^{n-5}}, & \text{if~} \frac{t}{2^{n-2}}\in (\frac{3}{4},\frac{7}{8}] \\p_{n-1,t-2^{n-3}}, & \text{if~} \frac{t}{2^{n-2}}\in (\frac{7}{8},1] \\
             \end{array}
\right.
\qquad 
u_B(q_{n,t})=
\left\{
             \begin{array}{ll}
             q_{n,t}, & \text{if~} \frac{t}{2^{n-2}}\in [0,\frac{1}{2}] \\
             q_{n+1,t+2^{n-3}}, & \text{if~} \frac{t}{2^{n-2}}\in (\frac{1}{2},\frac{3}{4}] \\
            q_{n,t-2^{n-5}}, & \text{if~} \frac{t}{2^{n-2}}\in (\frac{3}{4},\frac{7}{8}] \\q_{n-1,t-2^{n-3}}, & \text{if~} \frac{t}{2^{n-2}}\in (\frac{7}{8},1] \\
             \end{array}
\right.
\end{alignat*}

\begin{remark}
We can check that $u_A^{(1,2)}$ is contained in a $4\times 4$ matrix and $u_B^{(1,2)}$ is contained in a $8\times 8$ matrix. 
\end{remark}

\subsection{The infinitesimal generators $X_C,X_D,X_E$}

incomplete

\addcontentsline{toc}{section}{References}

\end{document}